\documentclass{amsart}
\usepackage{graphics}
\usepackage{graphicx}
\usepackage{amsfonts,amsmath,mathrsfs}

 \newtheorem{theorem}{Theorem}[section]
 \newtheorem{corollary}[theorem]{Corollary}
 \newtheorem{lemma}[theorem]{Lemma}{\rm}
 \newtheorem{proposition}[theorem]{Proposition}

 {\rm}
 \newtheorem{assumption}[theorem]{Assumption}
 
 \newtheorem{example}{Example}
\numberwithin{equation}{section}

\newcommand\bv{\mathbf{v}}

\newcommand\x{\mathbf{x}}
\newcommand\m{\mathbf{m}}

\newcommand\bp{\mathbf{p}}
\newcommand\A{\mathbf{A}}

\newcommand\R{\mathbb{R}}

\newcommand\N{\mathbb{N}}

\newcommand\M{\mathbf{M}}

\newcommand\B{\mathbf{B}}
\newcommand\q{\mathbf{q}}

\newcommand\balpha{{\boldsymbol{\alpha}}}
\newcommand\bbeta{{\boldsymbol{\beta}}}
\newcommand\bgamma{{\boldsymbol{\gamma}}}

\newcommand\bpsi{{\boldsymbol{\psi}}}

\newcommand\bphi{\boldsymbol{\phi}}
\newcommand\bnu{\boldsymbol{\nu}}
\newcommand\bmu{\boldsymbol{\mu}}

\begin{document}
\title[Convex relaxations for total variation distance]{A hierarchy of convex relaxations for the total variation distance}

\thanks{The author is supported by the AI Interdisciplinary Institute ANITI  funding through the french program
``Investing for the Future PI3A" under the grant agreement number ANR-19-PI3A-0004. This research is also part of the programme DesCartes and is supported by the National Research Foundation, Prime Minister's Office, Singapore under its Campus for Research Excellence and Technological Enterprise (CREATE) programme.}

\author{Jean B. Lasserre}
\address{LAAS-CNRS and Toulouse School of Economics (TSE)\\
University of Toulouse\\
LAAS, 7 avenue du Colonel Roche, BP 54200\\
31031 Toulouse C\'edex 4, France}
\email{lasserre@laas.fr}

\date{}

\begin{abstract}
Given two measures $\mu,\nu$ on $\R^d$ that  satisfy Carleman's condition, we provide a numerical 
scheme to approximate as closely as desired the total variation distance between $\mu$ and $\nu$.
(In particular, the supports of $\mu$ and $\nu$ are not necessarily compact.)
It consists of solving a sequence (hierarchy) of convex relaxations whose associated sequence of optimal values
converges to the total variation distance, an additional illustration
of the versatility of the Moment-SOS hierarchy.
Each relaxation in the hierarchy
is a semidefinite program whose size increases 
with the number of involved moments. It has an optimal solution which 
is a couple of degree-$2n$ pseudo-moments which converge, as $n$ grows,  to 
moments of the Hahn-Jordan decomposition of $\mu-\nu$. Illustrative examples are provided.\\
{\bf MSC: 46N30, 47N30, 60B10 60-08, 62-08, 90C22, 90C23}\\
{\bf Keywords: Total variation distance, Moment problem, Polynomial Optimization, Convex relaxations,
Semidefinite programming}
\end{abstract}
\maketitle

\tableofcontents

\section{Introduction}\label{sec1}

This paper is concerned with the numerical evaluation of the 
\emph{total variation} distance between two given probability measures, based on knowledge of their moments.

Evaluating a ``distance" between measures is an important topic with many applications, 
e.g. for homogeneity testing and independence testing  as advocated in \cite{ers},
for defining ambiguity sets in distributionally robust optimization 
\cite{dist-robust1,dist-robust2,dist-robust3,dist-robust4}, and has also become increasingly important in Data Science and Machine Learning in particular.
Among possible choices, 
the family of \emph{integral probability metrics} (IPM)  which includes the
Kantorovich, Dudley, Kolmogorov and total variation (TV) metrics, is discussed in \cite{ers} 
where the authors provide several empirical estimators of the associated distances between two distributions, based on random i.i.d. samples. See also \cite{entropy} for a discussion on relative merits of several distances.  

In particular, the Kantorovich metric (dual to Wasserstein distance) has become popular and one reason
is that its optimal transport formulation allows to define
efficient specialized procedures (e.g. the Sinkhorn algorithm)
for its computation \cite{cuturi}.  On the other hand, as the TV  distance 
is the same as the Wasserstein distance with (nasty) cost 
function $c(x,y)=1_{x\neq y}(x,y)$, it is an indication that its effective computation is a computational challenge. For instance, in \cite{ers} where the authors provide several empirical estimators of integral probability metrics (IPMs), when specializing to TV distance the resulting estimator
is not consistent, and for this reason the authors provide lower bounds \cite[Proposition 5.1]{ers}.
The reason is that the set of bounded measurable functions of norm $1$ is too large for efficient evaluation of
$TV(\mathbb{P},\mathbb{Q})=\sup_f \{\,\left\vert \int f\,d\mathbb{P}-\int f\,d\mathbb{Q}\right\vert:\Vert f\Vert_\infty\leq 1\,\}$ for two distributions $\mathbb{P}$ and $\mathbb{Q}$.
In view of such difficulties, recent contributions have focused on providing analytical upper and/or lower 
bounds on $TV(\mathbb{P},\mathbb{Q})$ for $\mathbb{P},\mathbb{Q}$ in some classes 
of distributions, e.g. two high-dimensional gaussians with same mean in \cite{devroye-2}, or mixture of two Gaussians with same covariance matrix in \cite{PMLR22}, or two arbitrary measures with given means and variance in \cite{Nishiyama}; recently in \cite{gaussian-high} 
the authors provide a tight (up to a constant factor) lower bound on the TV distance for
high-dimensional gaussians.  Finally, let us mention Pinsker's inequality 
$\Vert\mu-\nu\Vert_{TV}\leq \sqrt{D_{KL}(\mu\Vert\nu)/2}$ 
which provides
an upper bound on the TV distance via the Kullback-Leibler divergence \cite[\S 3.1]{csiszar}, and the bounds
$H(\mu,\nu)^2\leq \Vert\mu-\nu\Vert_{TV}\leq \sqrt{2}H(\mu,\nu)$ 
via  the Hellinger distance $H$ \cite[Chapter 2]{Tsybakov}.

In another direction, in \cite{barron}  the authors consider estimators of an unknown distribution $\mu$ and, in view of \cite{devroye-1}, advocate that some \`a priori information on $\mu$ is required if the estimators are required to be consistent in total variation. Then 
 under the assumption that the non-atomic part of $\mu$ is absolutely continuous with respect to some \`a priori known $\sigma$-finite measure, they provide estimators which are consistent in total variation 
 (a.s. and in expectation).\\

{\bf Contribution.} In this paper we show
that the \emph{total variation} distance is amenable to practical computation under relatively weak assumptions and so could provide an alternative to other distances when needed.
In a rather general context, we provide a numerical scheme to approximate as closely as desired the total variation distance between two measures $\mu$ and $\nu$.  
We do \emph{not} assume that $\mu$ or $\nu$ has compact support, but
we assume that all moments of $\mu$ and $\nu$ are finite, and that both $\mu$ and $\nu$ satisfy Carleman's condition.  We formulate the problem as an infinite-dimensional linear program (LP) on a space of measures, with 
an important constraint of domination inherited from the Hahn-Jordan decomposition of $\mu-\nu$. 
This LP-formulation is then viewed as an instance of the 
Generalized Moment Problem (GMP) with polynomial data, so that
the resulting GMP  is amenable to practical computation via the Moment-SOS 
hierarchy \cite{ICM-2018,HKL,lasserre-acta}.  As a result, one may approximate as closely as desired $\Vert\mu-\nu\Vert_{TV}$ as more and more moments of $\mu$ and $\nu$ are taken into account. More precisely:

(i) Our  numerical scheme 
consists of solving a sequence (hierarchy) of convex relaxations.
Each convex relaxation of the hierarchy is a semidefinite program\footnote{A semidefinite program is a convex conic optimization problem that can be solved efficiently, up to arbitrary precision fixed in advance; see e.g. \cite{anjos-lasserre}} whose size increases 
with the number of moments of $\mu$ and $\nu$ involved.  

(ii) The associated sequence of optimal values is monotone non decreasing and converges from below to $\Vert\mu-\nu\Vert_{TV}$. Crucial for convergence is a domination constraint coming from a property of the Hahn-Jordan decomposition of $\mu-\nu$.

(iii) 
The associated sequence of 
optimal solutions of relaxations (a couple of pseudo-moment vectors whose size increases), converges to the unique couple of infinite moment vectors
of the Hahn-Jordan decomposition $(\phi^*_+,\phi^*_-)$ of the signed measure $\mu-\nu$.

(iv) Each semidefinite relaxation of the hierarchy has a dual semidefinite program, very much in the spirit of
the classical TV-distance dual formulation
\begin{equation}
\label{intro-dual-classic}
\Vert\mu-\nu\Vert_{TV}\,=\,\sup_{f}\,\{\,\int f\,\,d\mu -\int f\,\,d\nu:\: \Vert f\Vert_\infty\leq 1\,\}\,
\end{equation}
where the ``sup" is over bounded measurable functions. Our hierarchy of duals shows
how the above classical formulation can be strengthened by (i) restricting to polynomials and (ii),
including an additional penalized integral term 
(w.r.t. $\mu$ and $\nu$) in the criterion. This term penalizes the unavoidable 
violation of the constraint $\Vert f\Vert_\infty\leq1$
when $f$ is a polynomial, and corresponds to the domination constraint in the primal formulation. 

(v) It turns out that when $\mu$ and $\nu$ are 
measures on the real line, our first lower bound with $n=1$ in the hierarchy (i.e. when one uses moments up to degree $2n=2$ only)
coincides with 
the analytical lower bound provided in \cite{Nishiyama} and based solely on the means and variances of $\mu$ and $\nu$.
As shown on some examples, the improvement is already significant with $n=2$ (i.e. by now taking into account moments up to degree $4$) and even better with $n=3,4$. 

Moreover, and as a nice feature of our numerical scheme,
we prove that for two atomic probability measures 
respectively supported on $m_1$ and $m_2$ atoms of the real line, 
the exact distance $\Vert\mu-\nu\Vert_{TV}$ is obtained as soon as the degree $n$ of the semidefinite relaxation in the hierarchy, matches $\max[m_1,m_2]$, i.e.,  when the minimal information required is used.
Hence, for instance, mutual singularity (if any) (i.e., $\Vert\mu-\nu\Vert_{TV}=2$) is detected at $n=\max[m_1,m_2]$. 
In addition, in principle
no geometric condition on a separation of the respective atoms of $\mu$ and $\nu$ is required and 
this nice feature is illustrated on a toy example with $\mu$ the Dirac $\delta_0$ at $x=0$ and $\nu$
the Dirac $\delta_\varepsilon$ at $x=\varepsilon$ (with arbitrary small $\varepsilon>0$). (However 
as in practice one uses a numerical semidefinite solver, the issue of requiring a minimum separation of the atoms becomes relevant due to unavoidable potential numerical inaccuracies.)

(vi) We also provide a set of illustrative numerical experiments 
to illustrate (a) our result on discrete measures on the real line,  and  (b) the behavior
of the algorithm  when $\mu$ and $\nu$ are two univariate Gaussian $\mathcal{N}(m_1,\sigma_1)$
and $\mathcal{N}(m_2,\sigma_2)$. 

(vii) Finally, it is worth emphasizing that the optimal value of each relaxation provides a 
\emph{guaranteed} lower bound on the TV distance
which increases with the degree of the relaxation.
This information already provided at early steps of the hierarchy should be useful
because in view of the current status of semidefinite solver software packages, one cannot expect to solve high degree relaxations, even for relatively modest dimensions.

At last but not least, the input data required at the $n$-th semidefinite relaxation of the hierarchy
is the \emph{finite} set of degree-$2n$ moments of $\mu$ and $\nu$,
 assumed to be known\footnote{For instance if $\mu$ and $\nu$ are two Gaussians 
 $\mathcal{N}(\m,\Sigma)$ and $\mathcal{N}(\m',\Sigma')$ respectively, their 
 moments are known explicitly in terms of  $\m,\m'$ and the entries of $\Sigma$ and $\Sigma'$.
 The same is true e.g. for pairs of exponential measures, or gaussian mixtures. }
  or estimated from random  i.i.d. 
samples drawn from $\mu$ and $\nu$. In the latter case, by the SLLN, such a finite set of degree-$2n$ moments can be estimated  as closely as desired and almost surely, provided that the sample size is sufficiently large.
Then the true moment matrices $\M_n(\mu)$ and $\M_n(\nu)$ of $\mu$ and $\nu$
needed in the $n$-th semidefinite relaxation of our numerical scheme, can be safely replaced with their analogues $\M_n(\mu^N)$ and $\M_n(\nu^N)$ obtained from the empirical measures $\mu^N$ and $\nu^N$ associated with a  sample  of size $N$. Of course, when $n$ increases, the sample size $N$ needs to be adjusted with the number of degree-$2n$ moments considered. This issue was also analyzed in \cite{CUP2022} to analyze the respective behavior 
of the Christoffel functions respectively associated with a measure $\mu$ and its empirical version $\mu^N$ from a sample.

Hence in summary, our contribution is to provide
an additional tool in the arsenal of algorithms available in applied probability, for approximating as closely as desired,
the total variation distance $\Vert\mu-\nu\Vert_{TV}$ based on moment information. This tool can thus be applied 

--  not only in applications where moments of $\mu$ and $\nu$ are available in closed form
(e.g. for $\mu$ and $\nu$ Gaussian or exponentials  (and their mixtures)), but also

-- even in applications where only random i.i.d. samples from $\mu$ and $\nu$ are available.
Indeed as already mentioned, with fixed $n$, the finite set of $2n$-degree empirical moments obtained from a sample, can approximate as closely  as desired the same set of true degree-$2n$ moments, provided the sample size is sufficiently large (hence adapted to the degree $n$ considered). 

As a technical comment, we wish to also emphasize the relatively weak assumption on the measures $\mu,\nu$, namely that they satisfy Carleman's condition
(no compact support is required).  Crucial in our numerical scheme are the two
domination constraints  $\phi^+\leq\mu$ and $\phi^-\leq\nu$  where $(\phi^+,\phi^-)$ is the Hahn-Jordan
decomposition of the signed measure $\mu-\nu$. While redundant  in the infinite-dimensional 
GMP formulation, they become extremely useful (as a compactification tool) in the relaxation scheme.
Interestingly, the effect of such domination constraints is also revealed in the dual problem
at step $n$ of the hierarchy when this dual is compared with the classical dual formulation \eqref{intro-dual-classic} of the TV distance.

In a final remark, as an alternative to algorithms based on discretizations (like e.g. Sinkhorn algorithm),
the Wasserstein distances $W_2(\mu,\nu)$ and $W_{1}(\mu,\nu)$ (with \emph{polynomial} or piecewise-polynomial cost $c(x,y)$) can also be approximated as closely as desired in  a mesh-free practical computation by (i) applying the Moment-SOS hierarchy \cite{HKL,CUP} for solving the associated optimal transport problem (OT), and (ii) extract the transport map from the moment vector solution of the OT, by a non-standard application of the Christoffel-Darboux kernel \cite{HL-constructive}; see e.g. the recent work of \cite{nouy-mula} for such an approach. However, crucial in \cite{HKL,CUP,nouy-mula} is the fact that the cost function is a polynomial 
(or piecewise-polynomial) and the supports are compact, which of course excludes the nasty cost function $1_{x\neq y}(x,y)$ in the TV distance formulation, let alone the non-compact supports of the involved measures.

\section{Main result}
\subsection{Notation and definitions}
Let $\R[\x]$ denote the ring of real polynomials in the variables $(x_1,\ldots,x_d)$ and $\R[\x]_n\subset\R[\x]$ be its subset 
of polynomials of total degree at most $n$. 
Let $\N^d_n:=\{\balpha\in\N^d: \sum_i\alpha_i\leq n\}$
with cardinal $s(n)={n+d\choose n}$. Let $\bv_n(\x)=(\x^{\balpha})_{\balpha\in\N^d_n}$ 
be the vector of monomials up to degree $n$, 
and let $\Sigma[\x]_n\subset\R[\x]_{2n}$ be the convex cone of polynomials of total degree at most $2n$ which are sum-of-squares (in short SOS).  A polynomial $p\in\R[\x]_n$ can be identified with its vector of
coefficients $\bp=(p_{\balpha})\in\R^{s(n)}$ in the monomial basis, and reads
\[\x\quad\mapsto p(\x)\,:=\,\langle\bp,\bv_n(\x)\rangle\,,\quad \forall p\in\R[\x]\,.\]
Denote by $\mathscr{M}(\R^d)$ (resp. $\mathscr{M}(\R^d)_+$) the space of signed (resp. positive) Borel measures on $\R^d$. For two Borel measures $\mu,\nu\in\mathscr{M}(\R^d)_+$, the notation $\mu\leq\nu$ stands for
$\mu(B)\leq\nu(B)$ for all Borel sets $B\in\mathcal{B}(\R^d)$. The support of a Borel measure $\mu$ on $\R^d$ is the smallest closed set $A$ such that
$\mu(\R^d\setminus A)=0$, and such a  set $A$ is unique. A Borel measure whose all moments are finite is said to be (moment) \emph{determinate} if there is no other measure with same moments.  

For a real symmetric matrix 
$\A=\A^T$, the notation $\A\succeq0$ (resp. $\A\succ0$) stands for $\A$ is positive semidefinite (p.s.d.) (resp. positive definite (p.d.)).

\noindent
{\bf Hahn-Jordan decomposition.}  Given two finite Borel measures $\mu,\nu\in\mathscr{M}(\R^d)_+$, the signed measure $\mu-\nu$ has a unique Hahn-Jordan decomposition $(\phi^*_+,\phi^*_-)$ such that $\phi^*_+-\phi^*_-=\mu-\nu$. 
That is, there exists a Borel set $A\in\mathcal{B}(\R^d)$ and two mutually singular\footnote{Two positive measures $\mu$ and $\nu$ on $\R^d$ are mutually singular (noted $\mu\perp\nu$), if there exist two disjoint Borel sets $F,G\subset\R^d$ such that $\R^d=F\cup G$,
$\mu(G)=0$ and $\nu(F)=0$.} positive measures $\phi^*_+,\phi^*_-$ 
such that $\phi^*_+(\R^d)=\phi^*_+(A)$ while $\phi^*_-(A)=0$, 
and
\begin{equation}
\label{Hahn}
\phi^*_+(B)\,=\,(\mu-\nu)(B\cap A)\,;\quad\phi^*_-(B)\,=\,(\nu-\mu)(B\cap (\R^d\setminus A))\,,\quad\forall B\in\mathcal{B}(\R^d)\,.\end{equation}
In addition, and obviously, $\Vert\mu-\nu\Vert_{TV}\leq \mu(1)+\nu(1)$. Moreover,  observe that $\phi^*_+\leq\mu$ and $\phi^*_-\leq\nu$. This property will  turn  out to be crucial for convergence of our numerical scheme.\\

\noindent
{\bf Riesz linear functional and moment matrix.} 
With a real sequence $\bphi=(\phi_{\balpha})_{\balpha\in\N^d}$  
(in bold) is associated the \emph{Riesz} linear functional $\phi\in \R[\x]^*$  (not in bold) defined by
\[p\:(=\sum_{\balpha}p_{\balpha}\x^{\balpha})\quad \mapsto \phi(p)\,=\,\langle\bphi,\bp\rangle\,=\,\sum_{\balpha}p_{\balpha}\,\phi_{\balpha}\,,\quad\forall p\in\R[\x]\,,\]
and the moment matrix $\M_n(\bphi)$
with rows and columns indexed by $\N^d_n$ (hence of size $s(n)$), and with entries
$\M_n(\bphi)(\balpha,\bbeta):=\phi(\x^{\balpha+\bbeta})=\phi_{\balpha+\bbeta}$, $\balpha,\bbeta\in\N^d_n$.
Notice that one may write indifferently $\M_n(\bphi)$ 
or $\M_n(\phi)$, i.e., referring to the sequence $\bphi$
truncated to degree-$2n$ moments or to the Riesz linear functional $\phi$ associated with $\bphi$.

A  real sequence $\bphi=(\phi_{\balpha})_{\balpha\in\N^d}$ has a representing mesure 
if its associated linear functional $\phi$ is a Borel measure on $\R^d$. In this case 
$\M_n(\bphi)\succeq0$ for all $n$; the converse is not true in  general.

\noindent
{\bf Carleman's condition.}
A sequence $\bmu=(\mu_{\balpha})_{\balpha\in\N^d}$ satisfies Carleman's condition if
\begin{equation}
 \label{carleman}
\forall i=1,\ldots,d\,:\quad  \sum_{j=1}^\infty \mu(x_i^{2j})^{-1/2j}\,=\,+\infty\,.
\end{equation}
The following theorem is due to Nussbaum:
\begin{theorem}(\cite[Theorem 3.5]{CUP})
\label{th-carleman}
Let a sequence $\bmu=(\mu_{\balpha})_{\balpha\in\N^d}$ 
be such that $\M_n(\bmu)\succeq0$, for all $n\in\N$. If $\bmu$ satisfies Carleman's condition \eqref{carleman}
then $\bmu$ has a representing measure $\mu$ on $\R^d$ and $\mu$ is determinate.
\end{theorem}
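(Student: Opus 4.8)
The plan is to construct the representing measure as the joint spectral measure of a commuting family of self-adjoint multiplication operators, Carleman's condition \eqref{carleman} being exactly what produces and then uniquely determines the required self-adjoint extensions. Write $L$ for the Riesz functional associated with $\bmu$, so that $L(\x^\balpha)=\mu_\balpha$. First I would use the positivity $\M_n(\bmu)\succeq0$ ($n\in\N$) to endow $\R[\x]$ with the positive semidefinite inner product $\langle p,q\rangle:=L(pq)$; factoring out the subspace $\{p:L(p^2)=0\}$ and completing yields a Hilbert space $H$ in which the polynomials form a dense subspace. On the dense domain of (classes of) polynomials, each map $X_i\colon p\mapsto x_i\,p$ ($i=1,\dots,d$) is a well-defined symmetric operator, since $L((x_ip)q)=L(p(x_iq))$, and the operators $X_1,\dots,X_d$ pairwise commute on this common invariant domain.

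The decisive step is to promote this algebraic commutativity to strong commutativity of self-adjoint extensions, and this is where \eqref{carleman} is used. Taking the cyclic vector to be the class of the constant polynomial $1$, one computes $\|X_i^{\,j}1\|^2=L(x_i^{2j})$, so that $\sum_{j\ge1}\|X_i^{\,j}1\|^{-1/j}=\sum_{j\ge1}L(x_i^{2j})^{-1/2j}=+\infty$ is precisely Carleman's condition; hence $1$ is a quasi-analytic vector for $X_i$. Using the Cauchy--Schwarz bound $L(x_i^{2j}\x^{2\balpha})\le L(x_i^{4j})^{1/2}L(\x^{4\balpha})^{1/2}$ together with the log-convexity of the diagonal moments $j\mapsto L(x_i^{2j})$ (itself a consequence of $\M_n(\bmu)\succeq0$), the same estimate shows that \emph{every} monomial $\x^\balpha$ is quasi-analytic for each $X_i$. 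Thus each $X_i$ admits a total set of quasi-analytic vectors contained in a common invariant domain, and Nussbaum's quasi-analytic vector theorem then yields two conclusions at once: every $X_i$ is essentially self-adjoint, and the self-adjoint closures $\overline{X_1},\dots,\overline{X_d}$ pairwise strongly commute.

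Granting strong commutativity, I would apply the spectral theorem for a $d$-tuple of commuting self-adjoint operators to obtain a joint projection-valued measure $E$ on $\R^d$ and set $\mu(B):=\langle E(B)\,1,1\rangle$ for $B\in\mathcal{B}(\R^d)$. This $\mu$ is a positive Borel measure, and the spectral calculus gives $\int\x^\balpha\,d\mu=\langle\overline{X}{}^\balpha 1,1\rangle=L(\x^\balpha)=\mu_\balpha$ for all $\balpha\in\N^d$, so $\mu$ represents $\bmu$. For determinacy, any representing measure $\nu$ embeds $H$ isometrically into $L^2(\nu)$ (the inner products agree since $L(pq)=\int pq\,d\nu$), carrying each $X_i$ to the restriction of multiplication by $x_i$; essential self-adjointness forces this restriction to have the \emph{unique} self-adjoint closure $\overline{X_i}$, so the joint spectral measure associated with the cyclic vector $1$ is the same for every representing measure, whence $\nu(B)=\langle E(B)1,1\rangle=\mu(B)$. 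Hence $\mu$ exists and is determinate.

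The main obstacle is the strong commutativity invoked in the second paragraph: in dimension $d\ge2$, two symmetric operators that commute on a common invariant core may nonetheless possess self-adjoint closures that fail to commute, and the construction of a joint spectral measure then breaks down. It is exactly the quasi-analytic growth imposed by the one-dimensional Carleman conditions \eqref{carleman}, transferred to all mixed monomials via Cauchy--Schwarz, that excludes these pathologies and secures a genuine joint spectral measure. Once this commutativity is in place, the GNS construction, the moment identity, and the determinacy argument are all routine.
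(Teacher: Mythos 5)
The paper does not prove this statement at all: it is quoted verbatim from the literature (Nussbaum's theorem, cited as \cite[Theorem 3.5]{CUP}), so there is no internal proof to compare against. Your argument is, in substance, the standard operator-theoretic proof of that cited result: the GNS construction from the positive semidefinite Riesz functional, the symmetric commuting multiplication operators $X_i$ on the dense domain of polynomials, the identification $\Vert X_i^j 1\Vert^2 = L(x_i^{2j})$ turning \eqref{carleman} into quasi-analyticity of the cyclic vector, Nussbaum's quasi-analytic vector theorem to get essential self-adjointness and strong commutativity of the closures, the joint spectral measure for existence, and the reducing-subspace argument for determinacy. All of this is correct. The one step you compress is the transfer of quasi-analyticity from $1$ to all monomials: after Cauchy--Schwarz you need $\sum_j L(x_i^{4j})^{-1/4j}=\infty$ to follow from $\sum_j L(x_i^{2j})^{-1/2j}=\infty$, which does hold because log-convexity of $m_j:=L(x_i^{2j})$ (a consequence of $\M_n(\bmu)\succeq0$, after normalizing $m_0$) makes $j\mapsto m_j^{-1/2j}$ essentially non-increasing, so the even-indexed subsum still diverges; it would be worth spelling this out, since it is exactly where the hypothesis $\M_n(\bmu)\succeq0$ is used a second time. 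With that lemma made explicit, and with the standard fact that a self-adjoint operator contained in a self-adjoint operator on a larger space forces the smaller space to be reducing (which underlies your determinacy step), the proof is complete and faithful to how this theorem is established in the references the paper points to.
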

A sufficient condition to ensure that a measure $\mu$ satisfies the multivariate Carleman's condition is that
\begin{equation}
\label{suff-cond}
\int \exp(c\vert x_i\vert)\,d\mu\,<\,\infty\,,\quad i=1,\ldots,d\,,\quad\mbox{for some scalar $c>0$.}
\end{equation}
\subsection{A preliminary result}

\begin{lemma}
\label{lem1}
 Let $\mu,\varphi\in\mathscr{M}(\R^d)_+$ have finite moments and assume that $\mu$ satisfies Carleman's condition \eqref{carleman}. Then 
 \begin{equation}
 \label{varphi<=mu}
 \varphi\leq\mu\quad\Leftrightarrow\quad \M_n(\varphi)\,\preceq\,\M_n(\mu)\,,\quad\forall n\in\N\,.
\end{equation}
 \end{lemma}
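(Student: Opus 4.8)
The plan is to reduce both directions to the positive semidefiniteness of the moment matrix of the signed measure $\mu-\varphi$, and then to use moment determinacy to pass from an equality of moments to an equality of measures.

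For the forward implication I would start from the observation that $\varphi\leq\mu$ is equivalent to $\mu-\varphi\in\mathscr{M}(\R^d)_+$, a positive Borel measure with finite moments. By linearity of the Riesz functional, $\M_n(\mu)-\M_n(\varphi)=\M_n(\mu-\varphi)$, and for any $p\in\R[\x]_n$ with coefficient vector $\bp$ one has $\bp^T\M_n(\mu-\varphi)\,\bp=\int p^2\,d(\mu-\varphi)\geq0$. Hence $\M_n(\varphi)\preceq\M_n(\mu)$ for all $n$. This direction is routine.

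The reverse implication is where the work lies. I would denote by $\bpsi$ the moment sequence of $\mu-\varphi$, so that $\M_n(\bpsi)=\M_n(\mu)-\M_n(\varphi)\succeq0$ for all $n$ by hypothesis, and the key preliminary step is to check that $\bpsi$ itself satisfies Carleman's condition \eqref{carleman}. Positivity of $\M_n(\bpsi)$ forces its diagonal entries, hence $\psi(x_i^{2j})$, to be nonnegative; and since $\varphi$ is a positive measure its even moments are nonnegative, so $\psi(x_i^{2j})=\mu(x_i^{2j})-\varphi(x_i^{2j})\leq\mu(x_i^{2j})$. The two-sided bound $0\leq\psi(x_i^{2j})\leq\mu(x_i^{2j})$ then gives $\psi(x_i^{2j})^{-1/2j}\geq\mu(x_i^{2j})^{-1/2j}$, so the Carleman series for $\bpsi$ diverges because the one for $\mu$ does.

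Granting this, I would invoke Nussbaum's Theorem \ref{th-carleman} for $\bpsi$ to obtain a \emph{positive} representing measure $\tilde\psi\in\mathscr{M}(\R^d)_+$ which is moreover determinate. The two positive measures $\mu$ and $\varphi+\tilde\psi$ then share the moment sequence $\bmu$, and since $\mu$ satisfies Carleman's condition it too is determinate by Theorem \ref{th-carleman}; therefore $\mu=\varphi+\tilde\psi$, which forces $\mu-\varphi=\tilde\psi\geq0$, i.e. $\varphi\leq\mu$. The hard part will be precisely this passage from the moment inequality to the measure inequality: the naive move of applying determinacy directly to the signed measure $\mu-\varphi$ is not licensed, since Nussbaum's theorem concerns positive measures, so the legitimate route is to first manufacture the positive measure $\tilde\psi$ and then transfer determinacy to the genuinely positive pair $\mu$ and $\varphi+\tilde\psi$. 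Establishing that $\bpsi$ inherits Carleman's condition from $\mu$ is what unlocks this step.
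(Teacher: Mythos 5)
Your proposal is correct and follows essentially the same route as the paper: reduce to the moment sequence of $\mu-\varphi$, verify that it inherits Carleman's condition from $\mu$ via the two-sided bound $0\leq\psi(x_i^{2j})\leq\mu(x_i^{2j})$, apply Nussbaum's theorem to obtain a positive determinate representing measure, and conclude by determinacy of $\mu$ that $\mu=\varphi+\tilde\psi$. Your explicit remark that determinacy cannot be applied directly to the signed measure is exactly the point the paper's argument is built around, and your justification of the Carleman step is if anything slightly more carefully worded than the paper's.
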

\begin{proof}
$\Rightarrow$ is straightforward. Indeed: 
\[\mu\geq\varphi\Rightarrow \left[\int p^2\,d\mu\,\geq\,\int p^2\,d\varphi\,,\:\forall p\in\R[\x]\,\right]\quad\Rightarrow\quad\M_n(\mu)\,\succeq\,
\M_n(\varphi)\,,\:\forall n\in\N\,.\]
$\Leftarrow$ Assume that $\M_n(\varphi)\preceq\M_n(\mu)$
for all $n\in\N$, 
and consider the sequence $\bgamma=(\gamma_{\balpha})_{\balpha\in\N^d}$, with
$\gamma_{\balpha}=\mu_{\balpha}-\varphi_{\balpha}$, for all $\balpha\in\N^d$. Then 
$\int x_i^{2n}d\varphi\leq\int x_i^{2n}d\mu$ for all $n$, and as Carleman's condition \eqref{carleman} holds for $\mu$, we infer
$\gamma(x_i^{2n})\leq \mu(x_i^{2n})$ for all $n$, and all $i=1\ldots,d$. This implies
that $\bgamma$ satisfies Carleman's condition \eqref{carleman} and therefore, as $\M_n(\bgamma)=\M_n(\mu)-\M_n(\varphi)\succeq0$ for all $n$,
by Theorem \ref{th-carleman},
$\bgamma$ has a determinate representing measure $\gamma$ on $\R^d$. In particular:
\[\int\x^{\balpha}\,d(\gamma+\varphi)\,=\,\gamma_{\balpha}+\varphi_{\balpha}\,=\,\mu_{\balpha}\,=\,\int \x^{\balpha}\,d\mu\,,\quad \forall\balpha\in\N^d\quad\Rightarrow\gamma+\varphi\,=\,\mu\,,\]
where the last statement follows from determinateness of $\mu$. Hence $\varphi\,\leq\,\mu$.
\end{proof}

\subsection{Main result}
Given two finite Borel measures $\mu$ and $\nu$ on $\R^d$, introduce the infinite-dimensional LP:

\begin{equation}
\label{LP}
\tau=\inf_{\phi^+,\phi^-\in\mathscr{M}(\R^d)_+}\,\{\,\phi^+(1)+\phi^-(1):\quad \phi_+-\phi_-\,=\,\mu-\nu\,\}\,.
\end{equation}
\begin{proposition}
 \label{prop-1}
 The LP \eqref{LP} has a unique optimal solution
 $(\phi^*_+,\phi^*_-)$ which is the Hahn-Jordan decomposition  of the signed measure $\mu-\nu$, and therefore
 $\tau=\phi^*_+(1)+\phi^*_-(1)=\Vert \mu-\nu\Vert_{TV}$.
\end{proposition}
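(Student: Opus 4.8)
The plan is to show two things: first, that the Hahn--Jordan decomposition $(\phi^*_+,\phi^*_-)$ is feasible for the LP \eqref{LP} and achieves the objective value $\Vert\mu-\nu\Vert_{TV}$; and second, that it is optimal and in fact the unique optimum. Feasibility is immediate from the defining property $\phi^*_+-\phi^*_-=\mu-\nu$, and the identity $\tau=\phi^*_+(1)+\phi^*_-(1)=\Vert\mu-\nu\Vert_{TV}$ follows from the standard fact that the total variation norm of a signed measure equals the total mass of the positive and negative parts of its Hahn--Jordan decomposition. So the real content is the lower bound: any feasible $(\phi^+,\phi^-)$ must satisfy $\phi^+(1)+\phi^-(1)\geq\Vert\mu-\nu\Vert_{TV}$.

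\textbf{Lower bound and optimality.} First I would take an arbitrary feasible pair $(\phi^+,\phi^-)\in\mathscr{M}(\R^d)_+^2$ with $\phi^+-\phi^-=\mu-\nu=\phi^*_+-\phi^*_-$. The key observation is a minimality property of the Hahn--Jordan decomposition: among all ways of writing the signed measure $\mu-\nu$ as a difference of two positive measures, the Hahn--Jordan pair is the one with smallest total mass. Concretely, from $\phi^+-\phi^-=\phi^*_+-\phi^*_-$ I would rearrange to $\phi^+-\phi^*_+=\phi^--\phi^*_-=:\rho$, a common signed measure, giving $\phi^+=\phi^*_++\rho$ and $\phi^-=\phi^*_-+\rho$ with $\rho=\phi^+-\phi^*_+$. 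Evaluating total masses,
\begin{equation*}
\phi^+(1)+\phi^-(1)\,=\,\phi^*_+(1)+\phi^*_-(1)+2\rho(\R^d)\,,
\end{equation*}
so it suffices to prove $\rho(\R^d)\geq0$, with equality forcing $\rho=0$. Here I would exploit the support structure from \eqref{Hahn}: on the set $A$ carrying $\phi^*_+$ one has $\phi^-=\phi^*_-+\rho=\rho$ restricted appropriately, and positivity of $\phi^-$ forces $\rho\geq0$ there, while on $\R^d\setminus A$ positivity of $\phi^+=\phi^*_++\rho$ forces $\rho\geq0$; combining shows $\rho$ is a positive measure, hence $\rho(\R^d)\geq0$.

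\textbf{Uniqueness.} From the display above, equality $\phi^+(1)+\phi^-(1)=\phi^*_+(1)+\phi^*_-(1)$ holds exactly when $\rho(\R^d)=0$; since $\rho$ has been shown to be a positive measure, $\rho(\R^d)=0$ gives $\rho=0$, i.e. $\phi^+=\phi^*_+$ and $\phi^-=\phi^*_-$. This simultaneously establishes optimality of $\tau=\phi^*_+(1)+\phi^*_-(1)$ and uniqueness of the optimal solution.

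\textbf{Main obstacle.} The step I expect to require the most care is the argument that $\rho=\phi^+-\phi^*_+$ is a positive measure. This is really the minimality of the Hahn--Jordan decomposition and must be argued cleanly on the two pieces $A$ and $\R^d\setminus A$ of the Hahn partition, using that both $\phi^+$ and $\phi^-$ are positive and the explicit form \eqref{Hahn}. Everything else reduces to the defining identities and elementary mass bookkeeping; notably, Carleman's condition and Lemma \ref{lem1} play no role in this infinite-dimensional statement (they enter only later, in the finite relaxations), so I would be careful not to invoke them here.
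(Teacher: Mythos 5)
Your proof is correct and follows essentially the same route as the paper: the paper's one-line argument simply invokes the inequality $\phi^+(1)+\phi^-(1)\geq\Vert\phi^+-\phi^-\Vert_{TV}$ (the minimality of the Jordan decomposition) as a known fact, which is precisely what you establish via the positive measure $\rho=\phi^+-\phi^*_+=\phi^--\phi^*_-$ and the Hahn partition. Your write-up is in fact slightly more complete, since the equality case of your bookkeeping ($\rho(\R^d)=0$ together with $\rho\geq0$ forcing $\rho=0$) justifies the uniqueness claim, which the paper's proof asserts in the statement but does not spell out.
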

\begin{proof}
Let $(\phi^+,\phi^-)$ be an arbitrary feasible solution of \eqref{LP}. Then as $\phi^+-\phi^-=\mu-\nu$ one obtains
$\phi^+(1)+\phi^-(1)\geq\Vert\phi^+-\phi^-\Vert_{TV}=\Vert\mu-\nu\Vert_{TV}$. On the other hand, the Hahn-Jordan decomposition $(\phi^*_+,\phi^*_-)$ of $\mu-\nu$ is feasible for \eqref{LP}, with value $\Vert\mu-\nu\Vert_{TV}$,
whence the result.
\end{proof}
Unfortunately the LP \eqref{LP} is not very useful as its stands. It is just a particular rephrasing of the total variation distance between $\mu$ and $\nu$. However we next see the a slight reinforcement of \eqref{LP}
will turn out to be very useful when passing to some hierarchy of convex relaxations. Indeed:
\begin{proposition}
\label{prop-2}
The infinite-dimensional linear program
\begin{equation}
\label{LP-new}
\rho\,=\,\inf_{\phi^+,\phi^-\in\mathscr{M}(\R^d)_+}\,\{\,\phi^+(1)+\phi^-(1):\quad \phi_+-\phi_-\,=\,\mu-\nu\,;
\quad \phi^+\leq\,\mu\,;\:\phi^-\leq\,\nu\,\}\,
\end{equation}
has same optimal value $\tau=\Vert\mu-\nu\Vert_{TV}$, and optimal solution $(\phi^*_+,\phi^*_-)$ as \eqref{LP}.
\end{proposition}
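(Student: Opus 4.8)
The plan is to establish the two inequalities $\rho\geq\tau$ and $\rho\leq\tau$ separately, and then to deduce uniqueness of the optimizer, so that the whole statement reduces to a short corollary of Proposition \ref{prop-1}.

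First I would observe that \eqref{LP-new} is obtained from \eqref{LP} simply by adjoining the two extra constraints $\phi^+\leq\mu$ and $\phi^-\leq\nu$, while the objective $\phi^+(1)+\phi^-(1)$ is unchanged. Hence the feasible set of \eqref{LP-new} is contained in that of \eqref{LP}, and shrinking the feasible set of a minimization problem can only raise the infimum; this yields $\rho\geq\tau$ at once.

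For the reverse inequality I would exhibit a feasible point of \eqref{LP-new} attaining value $\tau$. The natural candidate is the Hahn-Jordan decomposition $(\phi^*_+,\phi^*_-)$ of $\mu-\nu$, which by Proposition \ref{prop-1} is the optimizer of \eqref{LP} with value $\tau=\Vert\mu-\nu\Vert_{TV}$ and already satisfies $\phi^*_+-\phi^*_-=\mu-\nu$. The only thing left to check is that it meets the two domination constraints, and this is the single substantive point of the proof. It is read off directly from the defining formulas \eqref{Hahn}: for every Borel set $B$ one has $\phi^*_+(B)=(\mu-\nu)(B\cap A)=\mu(B\cap A)-\nu(B\cap A)\leq\mu(B\cap A)\leq\mu(B)$, using $\nu(B\cap A)\geq0$ and monotonicity of $\mu$; the symmetric computation on $\R^d\setminus A$ gives $\phi^*_-\leq\nu$. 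Thus $(\phi^*_+,\phi^*_-)$ is feasible for \eqref{LP-new}, whence $\rho\leq\tau$ and therefore $\rho=\tau$.

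Finally, for uniqueness: any minimizer of \eqref{LP-new} is in particular feasible for \eqref{LP} with objective value $\tau$, hence a minimizer of \eqref{LP}, and by the uniqueness asserted in Proposition \ref{prop-1} it must coincide with $(\phi^*_+,\phi^*_-)$. I do not anticipate any real obstacle here: the entire content is the inclusion of feasible sets together with the elementary domination bound, and the latter was already flagged as an immediate consequence of the Hahn-Jordan decomposition in the discussion preceding the statement. The proposition is genuinely a short corollary of Proposition \ref{prop-1}; its significance is conceptual — the \emph{a priori} redundant constraints $\phi^+\leq\mu$, $\phi^-\leq\nu$ become the compactification device driving the later relaxations — rather than technical.
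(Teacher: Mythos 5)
Your proof is correct and follows essentially the same route as the paper: feasible-set inclusion gives $\rho\geq\tau$, and the feasibility of the Hahn--Jordan decomposition $(\phi^*_+,\phi^*_-)$ for \eqref{LP-new} (via the domination $\phi^*_+\leq\mu$, $\phi^*_-\leq\nu$, which the paper merely asserts and you verify explicitly from \eqref{Hahn}) gives $\rho\leq\tau$ together with uniqueness inherited from Proposition \ref{prop-1}.
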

\begin{proof}
 By construction, the optimal value $\rho$ of \eqref{LP-new} satisfies $\rho\geq\tau=\Vert\mu-\nu\Vert_{TV}$. On the other hand,  with $(\phi^*_+,\phi^*_-)$ being the Hahn-Jordan decomposition of $\mu-\nu$, observe that $\phi^*_+\leq\mu$, and
 $\phi^*_-\leq\nu$. Therefore $(\phi^*_+,\phi^*_-)$ is an optimal solution of \eqref{LP-new}. Equivalently, the constraints
 $\phi^+\leq\mu$ and $\phi^-\leq\nu$ are automatically satisfied at the optimal solution $(\phi^*_+,\phi^*_-)$ of \eqref{LP} and therefore \eqref{LP} and \eqref{LP-new} have same optimal value and same optimal solution.
\end{proof}
Next, from now on we make the following assumption:
\begin{assumption}
 \label{assumption-1}
 (i) All moments of $\mu$ and $\nu$ are finite, and 
 
 (ii) $\mu$ and $\nu$ satisfy \eqref{suff-cond} (hence satisfy Carleman's condition \eqref{carleman}) for some scalar $c>0$. \end{assumption}
Consider the optimization problem
\begin{equation}
\label{LP-2}
\begin{array}{rl}
\hat{\tau}=\displaystyle\min_{\phi^+,\phi^-\in\mathscr{M}(\R^d)_+}&\{\,\phi^+(1)+\phi^-(1):\quad \phi^+-\phi^-\,=\,\mu-\nu\,;\\
&\M_n(\phi^+)\preceq\,\M_n(\mu)\,;\quad\M_n(\phi^-)\,\preceq\,\M_n(\nu)\,,\quad\forall n\in\N\,\}.
\end{array}\end{equation}
\begin{corollary}
\label{coro-TV}
 Let Assumption \ref{assumption-1} hold. 
 Then the 
 Hahn-Jordan decomposition $(\phi^*_+,\phi^*_-)$ of the signed measure $\mu-\nu$,
 is the unique optimal solution of \eqref{LP-2}, and $\hat{\tau}=\tau=\Vert\mu-\nu\Vert_{TV}$.
\end{corollary}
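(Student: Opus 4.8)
The plan is to show that \eqref{LP-2} has the same optimal value and optimal solution as \eqref{LP-new}, and then invoke Proposition \ref{prop-2}. The key observation is that the two problems differ only in how the domination constraints are expressed: \eqref{LP-new} imposes $\phi^+\leq\mu$ and $\phi^-\leq\nu$ as measure inequalities, whereas \eqref{LP-2} imposes the moment-matrix inequalities $\M_n(\phi^+)\preceq\M_n(\mu)$ and $\M_n(\phi^-)\preceq\M_n(\nu)$ for all $n$. Lemma \ref{lem1} is precisely the bridge between these two formulations, so the heart of the argument is to apply it in both directions.

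First I would check that the feasible sets of \eqref{LP-new} and \eqref{LP-2} coincide. A feasible pair $(\phi^+,\phi^-)$ of \eqref{LP-2} must consist of positive measures with finite moments (so that the moment matrices $\M_n(\phi^\pm)$ are defined), and the equality $\phi^+-\phi^-=\mu-\nu$ together with finiteness of all moments of $\mu,\nu$ under Assumption \ref{assumption-1} guarantees that $\phi^+,\phi^-$ have finite moments. Since $\mu$ and $\nu$ satisfy Carleman's condition by Assumption \ref{assumption-1}(ii), Lemma \ref{lem1} applies with the roles $(\varphi,\mu)=(\phi^+,\mu)$ and $(\varphi,\mu)=(\phi^-,\nu)$. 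Thus the condition $\M_n(\phi^+)\preceq\M_n(\mu)$ for all $n$ is equivalent to $\phi^+\leq\mu$, and likewise $\M_n(\phi^-)\preceq\M_n(\nu)$ for all $n$ is equivalent to $\phi^-\leq\nu$. Hence the feasible set of \eqref{LP-2} is exactly the feasible set of \eqref{LP-new}.

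Since the two problems have identical feasible sets and identical objective functions $\phi^+(1)+\phi^-(1)$, they have the same optimal value and the same set of optimizers. By Proposition \ref{prop-2}, the optimal value of \eqref{LP-new} is $\tau=\Vert\mu-\nu\Vert_{TV}$ and its unique optimal solution is the Hahn-Jordan decomposition $(\phi^*_+,\phi^*_-)$ of $\mu-\nu$. Therefore $\hat\tau=\tau=\Vert\mu-\nu\Vert_{TV}$ and $(\phi^*_+,\phi^*_-)$ is the unique optimal solution of \eqref{LP-2}, which is the claim. I would also note that the minimum in \eqref{LP-2} is attained (justifying the use of $\min$ rather than $\inf$) precisely because the feasible set contains the optimizer $(\phi^*_+,\phi^*_-)$, which satisfies $\phi^*_+\leq\mu$ and $\phi^*_-\leq\nu$ by the remarks following the Hahn-Jordan decomposition.

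The main subtlety, and the only point requiring care, is the applicability of Lemma \ref{lem1}: one must ensure that every feasible $\phi^+,\phi^-$ has finite moments so that the moment matrices and the equivalence in \eqref{varphi<=mu} make sense, and that Carleman's condition for $\mu$ and $\nu$ (supplied by Assumption \ref{assumption-1}) is genuinely what licenses the reverse implication $\M_n(\phi^\pm)\preceq\M_n(\cdot)\Rightarrow\phi^\pm\leq\cdot$. Everything else is a direct transfer of Proposition \ref{prop-2}. Thus the corollary is essentially a restatement of Proposition \ref{prop-2} in moment-matrix language, with Lemma \ref{lem1} performing the translation.
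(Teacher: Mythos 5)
Your proof is correct and follows exactly the paper's argument, which simply observes that by Lemma \ref{lem1} the moment-matrix dominations in \eqref{LP-2} are equivalent to the measure dominations in \eqref{LP-new}, so the two programs coincide and Proposition \ref{prop-2} gives the conclusion. The only slight imprecision is your claim that the equality $\phi^+-\phi^-=\mu-\nu$ forces finiteness of the moments of $\phi^\pm$; what actually guarantees this is the domination constraint itself, since $\M_n(\phi^+)\preceq\M_n(\mu)$ yields $\int x_i^{2n}\,d\phi^+\leq\int x_i^{2n}\,d\mu<\infty$, but this does not affect the validity of the argument.
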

\begin{proof}
 By Lemma \ref{lem1}, \eqref{LP-new} and \eqref{LP-2} are equivalent.
\end{proof}

The nice feature of the LP \eqref{LP-2} when compared to its equivalent formulation \eqref{LP-new}, is that the cost as well as
the constraints of \eqref{LP-2} can next be formulated in terms of moments of $(\mu,\nu,\phi^+,\phi^-)$, so as  to yield the optimization problem:
\begin{equation}
\label{LP-3}
\begin{array}{rl}
\rho=\displaystyle\min_{\phi^+,\phi^-\in\mathscr{M}(\R^d)_+}&\{\,\phi^+(1)+\phi^-(1):\\
&\displaystyle\int \x^{\balpha}d(\phi^+-\phi^-)\,=\,
\displaystyle\int \x^{\balpha}\,d(\mu-\nu)\,,\quad\forall\balpha\in\N^d\,;\\
&\M_n(\phi^+)\,\preceq\,\M_n(\mu)\,;\:\M_n(\phi^-)\,\preceq\,\M_n(\nu)\,,\:\forall n\in\N\,\}\,,
\end{array}\end{equation}
which is an instance of the Generalized Moment Problem (GMP);
see e.g. \cite{CUP}.
\begin{corollary}
\label{corollary-2}
 Let Assumption \ref{assumption-1} hold. 
 Then the 
 Hahn-Jordan decomposition $(\phi^*_+,\phi^*_-)$ of the signed measure $\mu-\nu$,
 is the unique optimal solution of \eqref{LP-3}, and $\rho=\Vert\mu-\nu\Vert_{TV}$.
\end{corollary}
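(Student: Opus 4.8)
The plan is to show that \eqref{LP-3} and \eqref{LP-2} have exactly the same feasible set, after which the conclusion is immediate from Corollary \ref{coro-TV}. The two programs differ only in how the balance constraint is written: \eqref{LP-2} imposes the measure identity $\phi^+-\phi^-=\mu-\nu$, whereas \eqref{LP-3} imposes only the countable family of moment identities $\int\x^{\balpha}\,d(\phi^+-\phi^-)=\int\x^{\balpha}\,d(\mu-\nu)$ for all $\balpha\in\N^d$. Since equality of two measures trivially forces equality of all their moments, every feasible point of \eqref{LP-2} is feasible for \eqref{LP-3}. The entire content of the claim is therefore the reverse inclusion: a pair satisfying the moment identities together with the two domination constraints must in fact satisfy the measure identity.

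First I would fix a feasible point $(\phi^+,\phi^-)$ of \eqref{LP-3} and exploit the domination constraints. Since $\M_n(\phi^+)\preceq\M_n(\mu)$ and $\M_n(\phi^-)\preceq\M_n(\nu)$ for all $n$, Lemma \ref{lem1} yields $\phi^+\leq\mu$ and $\phi^-\leq\nu$, so in particular both have finite moments. The key algebraic manipulation is then to recast the \emph{signed} moment identity as an identity between two \emph{positive} measures: adding $\int\x^{\balpha}\,d\nu$ to both sides of the moment identity gives
\begin{equation*}
\int\x^{\balpha}\,d(\phi^++\nu)\,=\,\int\x^{\balpha}\,d(\phi^-+\mu)\,,\qquad\forall\balpha\in\N^d\,,
\end{equation*}
so that the positive measures $\phi^++\nu$ and $\phi^-+\mu$ share all their moments.

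The heart of the argument — and the step I expect to be the main obstacle — is upgrading this equality of moments into an equality of measures, which is precisely where Carleman's condition is needed. Here I would proceed as in the proof of Lemma \ref{lem1}. By Assumption \ref{assumption-1}(ii) both $\mu$ and $\nu$ satisfy the exponential bound \eqref{suff-cond}, and that bound is stable under addition, so $\mu+\nu$ satisfies \eqref{suff-cond} and hence Carleman's condition \eqref{carleman}. Because $\phi^+\leq\mu$ gives $\phi^++\nu\leq\mu+\nu$, we have $(\phi^++\nu)(x_i^{2j})\leq(\mu+\nu)(x_i^{2j})$ for every $j$ and every coordinate $i$; inverting and summing shows that $\phi^++\nu$ inherits Carleman's condition. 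Its moment matrices being positive semidefinite, Theorem \ref{th-carleman} makes $\phi^++\nu$ determinate. Two measures with identical moments, one of which is determinate, coincide, so $\phi^++\nu=\phi^-+\mu$, i.e. $\phi^+-\phi^-=\mu-\nu$ as signed measures, and $(\phi^+,\phi^-)$ is feasible for \eqref{LP-2}.

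Having matched the two feasible sets and noting that the objective $\phi^+(1)+\phi^-(1)$ is literally the same in both programs, I conclude that \eqref{LP-2} and \eqref{LP-3} have the same optimal value and the same optimizers, so $\rho=\hat{\tau}$. Corollary \ref{coro-TV} then delivers at once that $(\phi^*_+,\phi^*_-)$ is the unique optimal solution and $\rho=\Vert\mu-\nu\Vert_{TV}$.
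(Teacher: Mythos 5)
Your proposal is correct and follows essentially the same route as the paper: apply Lemma \ref{lem1} to get the dominations $\phi^+\leq\mu$, $\phi^-\leq\nu$, rewrite the signed moment identity as equality of moments of the positive measures $\phi^++\nu$ and $\phi^-+\mu$, and conclude equality of measures from determinacy, reducing to Corollary \ref{coro-TV}. The only cosmetic difference is that you establish determinacy by showing $\phi^++\nu$ inherits Carleman's condition and invoking Theorem \ref{th-carleman}, whereas the paper gets it directly from the exponential moment bound \eqref{suff-cond} applied to the dominated measures; both are valid.
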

\begin{proof}
 Let $(\phi^+,\phi^-)$ be an arbitrary feasible solution of \eqref{LP-3}. By Lemma \ref{lem1}, $\phi^+\leq\mu$ and $\phi^-\leq\nu$. Hence $\phi^++\nu\leq \mu+\nu$, and 
 $\phi^-+\mu\leq \mu+\nu$. As Assumption \ref{assumption-1}(ii) holds, 
 \begin{eqnarray*}
 \int \exp(c\,\vert x_i\vert)\,d(\phi^++\nu) &<&\int \exp(c\vert x_i\vert) \,d(\mu+\nu)\,<\,\infty\\
 \int \exp(c\,\vert x_i\vert)\,d(\phi^-+\mu) &<&\int \exp(c\vert x_i\vert) \,d(\mu+\nu)\,<\,\infty\,,
 \end{eqnarray*}
 and therefore the measure $\phi^++\nu$ (resp. $\phi^-+\mu$) is determinate. But then the constraint 
 $\int x^{\balpha}d(\phi^+-\phi^-)=\int x^{\balpha}d(\mu-\nu)$ for all $\balpha\in\N^d$ reads:
 \[\int \x^{\balpha}\,d(\phi^++\nu)\,=\,
 \int \x^{\balpha}\,d(\phi^-+\mu)\,,\quad\forall\balpha\in\N^d\,,\]
 which implies $\phi^++\nu=\phi^-+\mu$ by determinacy of the measures. Therefore
 $(\phi^+,\phi^-)$ is a feasible solution of \eqref{LP-2} with same value. In other words,  \eqref{LP-3} is equivalent to \eqref{LP-2}, whence the result.
 \end{proof}

\section{A convergent hierarchy of semidefinite relaxations}
As \eqref{LP-3} is an instance of the GMP, it is natural to 
apply the Moment-SOS hierarchy \cite{HKL,ICM-2018}.
With each fixed $n\in\N$, consider the optimization problem
\begin{equation}
\label{LP-relax}
\begin{array}{rl}
\rho_n=\displaystyle\min_{\bphi,\bpsi}&\{\,\phi(1)+\psi(1):\quad
 \phi_{\balpha}-\psi_{\balpha}\,=\,\mu_{\balpha}-\nu_{\balpha}\,,\quad\forall\balpha\in\N^d_{2n}\,;\\
&0\,\preceq\,\M_n(\bphi)\preceq\,\M_n(\bmu)\,;\quad 0\,\preceq\,\M_n(\bpsi)\,\preceq\,\M_n(\bnu)\,\}\,,
\end{array}\end{equation}
where now the optimization is over degree-$2n$ pseudo-moment vectors $\bphi=(\phi_{\balpha})_{\balpha\in\N^d_{2n}}$
and $\bpsi=(\psi_{\balpha})_{\balpha\in\N^d_{2n}}$ (hence not necessarily coming from measures $\phi$ and $\psi$ on $\R^d$). Of course \eqref{LP-relax} is an obvious relaxation of \eqref{LP-3}
and therefore $\rho_n\leq \rho=\Vert\mu-\nu\Vert_{TV}$ for all $n\in\N$.

Observe that for each fixed $n\in\N$, \eqref{LP-relax} is a semidefinite program that can be solved by off-the-shelf solvers
like GloptiPoly \cite{GloptiPoly} or Jump \cite{julia} (package of the Julia programming language).

\begin{theorem}
\label{th-3}
Let Assumption \ref{assumption-1} hold.

(i) For every fixed $n\in\N$, the optimization problem \eqref{LP-relax} has an optimal solution denoted
$(\bphi^{(n)},\bpsi^{(n)})$.

(ii) In addition, 
$\rho_n\uparrow \Vert\mu-\nu\Vert_{TV}$ as $n\to\infty$, and moreover,
\begin{equation}
 \label{th3-1}
 \lim_{n\to\infty}\phi^{(n)}_{\balpha}\,=\,\int\x^{\balpha}\,d\phi^*_+\,;\quad
 \lim_{n\to\infty}\psi^{(n)}_{\balpha}\,=\,\int\x^{\balpha}\,d\phi^*_-\,,\quad\forall\balpha\in\N^d\,,
\end{equation}
where $(\phi^*_+,\phi^*_-)$ is the Hahn-Jordan decomposition of the signed measure $\mu-\nu$.
\end{theorem}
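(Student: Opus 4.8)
The plan is to prove (i) by a compactness argument rooted in the domination constraints, and (ii) by coupling monotonicity of $(\rho_n)$ with a moment-extraction that reduces, at the decisive step, to the representing-measure machinery already established in Lemma \ref{lem1}. For part (i), I would first note that the feasible set of \eqref{LP-relax} is nonempty, since the truncation to degree $2n$ of the Hahn-Jordan decomposition $(\phi^*_+,\phi^*_-)$ is feasible, and that the objective $\phi(1)+\psi(1)=\phi_0+\psi_0$ is linear. The crucial point is boundedness: the constraints $0\preceq\M_n(\bphi)\preceq\M_n(\bmu)$ force the diagonal entries to satisfy $0\le\phi_{2\bbeta}\le\mu_{2\bbeta}$ for all $|\bbeta|\le n$, and positive semidefiniteness then gives $|\phi_{\balpha+\bbeta}|\le\sqrt{\phi_{2\balpha}\,\phi_{2\bbeta}}\le\sqrt{\mu_{2\balpha}\,\mu_{2\bbeta}}$, so every pseudo-moment of degree at most $2n$ is uniformly bounded; the same holds for $\bpsi$ against $\bnu$. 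Hence the feasible set is a closed and bounded subset of a finite-dimensional space, therefore compact, and the continuous objective attains its minimum. This is precisely where the domination constraints play their role as a compactification device.

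For the monotonicity in (ii), I would observe that given any feasible point at level $n+1$, its truncation to degree $2n$ is feasible at level $n$: the equality constraints for $|\balpha|\le 2n$ form a subset of those at level $n+1$, and $\M_n(\cdot)$ is a principal submatrix of $\M_{n+1}(\cdot)$, so the semidefinite domination is inherited. Since the objective depends only on $\phi_0,\psi_0$, this truncation has the same value, whence $\rho_n\le\rho_{n+1}$. Combined with the relaxation bound $\rho_n\le\rho=\Vert\mu-\nu\Vert_{TV}$, this shows that $\rho_n$ increases to some limit $\rho_\infty\le\rho$.

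The heart of the argument, and the main obstacle, is to show $\rho_\infty=\rho$ together with the moment convergence \eqref{th3-1}. Here I would take the optimal solutions $(\bphi^{(n)},\bpsi^{(n)})$ and use the uniform bounds above to extract, by a diagonal procedure, a subsequence along which $\phi^{(n)}_\balpha\to\phi^*_\balpha$ and $\psi^{(n)}_\balpha\to\psi^*_\balpha$ for every $\balpha\in\N^d$. Passing to the limit in the (for each fixed index, finitely many) linear and semidefinite constraints yields $\phi^*_\balpha-\psi^*_\balpha=\mu_\balpha-\nu_\balpha$ for all $\balpha$, together with $0\preceq\M_n(\bphi^*)\preceq\M_n(\bmu)$ and $0\preceq\M_n(\bpsi^*)\preceq\M_n(\bnu)$ for all $n$. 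The key step is then to upgrade the infinite pseudo-moment sequence $\bphi^*$ to a genuine measure: exactly as in the proof of Lemma \ref{lem1}, the bound $\phi^*(x_i^{2j})\le\mu(x_i^{2j})$ inherited from the domination, together with $\phi^*(x_i^{2j})\ge 0$ from $\M_n(\bphi^*)\succeq 0$, forces $\bphi^*$ to satisfy Carleman's condition \eqref{carleman}, so Theorem \ref{th-carleman} furnishes a determinate representing measure $\phi^+\in\mathscr{M}(\R^d)_+$; Lemma \ref{lem1} then gives $\phi^+\le\mu$, and the same reasoning yields $\phi^-\le\nu$ representing $\bpsi^*$.

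It remains to close the loop and upgrade the subsequential limit to a full limit. The pair $(\phi^+,\phi^-)$ is feasible for \eqref{LP-3} with value $\phi^+(1)+\phi^-(1)=\phi^*_0+\psi^*_0=\lim_k\rho_{n_k}=\rho_\infty$, so $\rho_\infty\ge\rho$, forcing $\rho_\infty=\rho$ and making $(\phi^+,\phi^-)$ optimal for \eqref{LP-3}; by the uniqueness established in Corollary \ref{corollary-2}, it must coincide with the Hahn-Jordan decomposition $(\phi^*_+,\phi^*_-)$. Since every subsequence of $(\bphi^{(n)},\bpsi^{(n)})$ admits, by the same extraction, a further subsequence converging to this one and the same limit, the full sequences of pseudo-moments converge, which is exactly \eqref{th3-1}. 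I expect the only genuinely delicate point to be the representing-measure step, where Carleman's condition must be transferred from $\mu$ and $\nu$ to the limiting pseudo-moment sequences via the domination inequalities; everything else is bookkeeping of the relaxation structure.
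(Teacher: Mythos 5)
Your proposal is correct and follows essentially the same route as the paper: domination-induced bounds on the pseudo-moments give compactness of the feasible set (existence), a subsequence extraction on the uniformly bounded moment arrays, transfer of Carleman's condition via $0\le\phi^*(x_i^{2j})\le\mu(x_i^{2j})$ to obtain representing measures by Theorem \ref{th-carleman} and Lemma \ref{lem1}, identification of the limit with the Hahn--Jordan decomposition through feasibility and uniqueness for the infinite-dimensional LP, and the arbitrary-subsequence principle to upgrade to full convergence. The only differences are cosmetic: you bound $|\phi_{\balpha+\bbeta}|\le\sqrt{\mu_{2\balpha}\mu_{2\bbeta}}$ via $2\times 2$ principal minors and use a diagonal extraction where the paper invokes \cite[Proposition 3.6]{CUP} and Banach--Alaoglu after a rescaling into the unit ball of $\ell_\infty$, and you spell out the monotonicity $\rho_n\le\rho_{n+1}$, which the paper asserts without detail.
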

\begin{proof}
(i) Let $(\bphi,\bpsi)$ be an arbitrary  feasible solution of \eqref{LP-relax}.
As $\M_n(\bphi)\preceq\M_n(\mu)$ one obtains
  \[\phi(1)\,\leq\,\mu(1)\,;\quad \phi(x_i^{2n})\,\leq\,\mu(x_i^{2n})\,,\quad\forall i=1,\ldots,d\,,\]
 and therefore, as $\M_n(\phi^+)\succeq0$,  by \cite[Proposition 3.6]{CUP},
  \begin{equation}
 \label{rho-n}\vert\phi_{\balpha}\vert \,\leq\,\max[\mu(1)\,,\,\max_i \mu(x_i^{2d})]\,,
 \quad\forall\balpha\in\N^d_{2n}\,.
 \end{equation}
 Similarly, as $0\preceq\M_n(\bpsi)\preceq\M_n(\nu)$,
 \begin{equation}
 \label{rho'-n}
 \vert\psi_{\balpha}\vert \,\leq\,\max[\nu(1)\,,\,\max_i \nu(x_i^{2d})]\,,
 \quad\forall\balpha\in\N^d_{2n}\,.\end{equation}
 Therefore the feasible set of \eqref{LP-relax} is compact. Hence 
 \eqref{LP-relax} has an optimal solution.
 
 (ii) We proceed in two steps: We first rescale the vector
  $\bphi^{(n)}$ to $\hat{\bphi}^{(n)}$ where $\sup_{\balpha\in\N^d_{2n}}\vert\hat{\phi}^{(n)}_{\balpha}\vert\leq 1$, and complete this finite vector with zeros to make it an infinite sequence of the unit ball of the Banach space $\ell_\infty$
  of uniformly bounded infinite sequences (the dual of the Banach space $\ell_1$ 
  of infinite sequences that are summable). Then we use Banach-Alaoglu's  theorem which
  states that the unit ball of $\ell_\infty$
  is weak-star sequentially compact (i.e. compact in the weak topology $\sigma(\ell_\infty,\ell_1)$). Finally we use the fact that the limit is the same for all arbitrary converging subsequences.
 
 For each fixed $n\in\N$, and since $\M_k(\bphi^{(n)})$ is a submatrix of $\M_n(\bphi^{(n)})$ for all $k=1,\ldots,n$, again by \cite[]{CUP},
 \[  \forall\balpha\,:\:  2k-1\leq\vert\balpha\vert\leq 2k\,:\quad
 \vert\phi^{(n)}_{\balpha}\vert \,\leq\,\max[\mu(1)\,,\,\max_i \mu(x_i^{2k})]\,=:\,a_k,;\quad
  k=1,\ldots,n\,,\]
  and similarly 
   \[ \vert\psi^{(n)}_{\balpha}\vert \,\leq\,\max[\nu(1)\,,\,\max_i \nu(x_i^{2k})]\,=:\,b_k,\quad\forall\balpha\,:\:
  2k-1\leq\vert\balpha\vert\leq 2k\,;\: k=1,\ldots,n\,.\]
  
   Next, introduce the new \emph{infinite} pseudo-moment sequences:
 \begin{equation}
 \label{aux-bound-1}
 \hat{\phi}^{(n)}_{\balpha}\,:=\,\phi^{(n)}_{\balpha}/a_k\,,\quad\forall \balpha: \:2k-1\leq\vert\balpha\vert\leq 2k\,;\quad k=1,\ldots,n\,, \end{equation}
and $\hat{\phi}^{(n)}_{\balpha}=0$ for all $\balpha\in\N^d$ with $\vert\balpha\vert>2n$. Similarly,
 \begin{equation}
  \label{aux-bound-2}
 \hat{\psi}^{(n)}_{\balpha}\,:=\,\psi^{(n)}_{\balpha}/b_k\,,\quad\forall \balpha: \:2k-1\leq\vert\balpha\vert\leq 2k\,;\quad k=1,\ldots,n\,,\end{equation}
 and $\hat{\psi}^{(n)}_{\balpha}=0$ for all $\balpha\in\N^d$ with $\vert\balpha\vert>2n$. 
 
  By this re-scaling of 
  $\bphi^{(n)}=(\phi_{\balpha})_{\balpha\in\N^d_{2n}}$
 to $\hat{\bphi}^{(n)}$ (and of
  $\bpsi^{(n)}$ to $\hat{\bpsi}^{(n)}$),
both infinite sequences $\hat{\bphi}^{(n)}$ and $\hat{\bpsi}^{(n)}$ are now considered as elements of the unit ball $\B(0,1)$ of the Banach space
$\ell_\infty$ of uniformly bounded sequences, which  is sequentially compact in the $\sigma(\ell_\infty,\ell_1)$ weak topology. Therefore there exist $\hat{\bphi},\hat{\bpsi}\in\B(0,1)$ and a subsequence $(n_\ell)_{\ell\in\N}$ such that
\begin{equation}
\label{conv-1}
\lim_{\ell\to\infty}\hat{\phi}^{(n_\ell)}_{\balpha}\,=\,\hat{\phi}_{\balpha}\,;\quad
\lim_{\ell\to\infty}\hat{\psi}^{(n_\ell)}_{\balpha}\,=\,\hat{\psi}_{\balpha}\,,\quad\forall \balpha\in\N^d\,.
\end{equation}
By doing the reverse scaling of \eqref{aux-bound-1}-\eqref{aux-bound-2}, one obtains:
\begin{equation}
\label{conv-2}
\forall \balpha\in\N^d\,:\quad\lim_{\ell\to\infty}\phi^{(n_\ell)}_{\balpha}\,=\,\phi_{\balpha}\,;\quad
\lim_{\ell\to\infty}\psi^{(n_\ell)}_{\balpha}\,=\,\psi_{\balpha}\,,
\end{equation}
where for each $k\in\N$:
\[\phi_{\balpha}\,:=\,a_k\cdot\hat{\phi}_{\balpha}\,;\quad
\psi_{\balpha}\,:=\,b_k\cdot\hat{\psi}_{\balpha}\,;
\quad\forall \balpha: \:2k-1\leq\vert\balpha\vert\leq 2k\,.\]
Fix $t\in\N$ arbitrary. As $\M_t(\bphi^{(n)})\succeq0$ for all $n\geq t$, then by \eqref{conv-2},
$0\preceq\M_t(\bphi)\preceq\M_t(\mu)$, and as $t$ was arbitrary, $0\preceq\M_n(\bphi)\preceq\M_n(\mu)$ for all $n$, and
similarly $0\preceq\M_n(\bpsi)\preceq\M_n(\nu)$ for all $n$.
 Next, as $\M_n(\bphi)\preceq\M_n(\mu)$, and $\bmu$ satisfies Carleman's condition, then so does $\bphi$,
and as $\M_n(\bphi)\succeq0$ for all $n$, it follows that 
$\bphi=(\phi_{\balpha})_{\balpha\in\N^d}$ has a representing measure $\phi$ on $\R^d$. Similarly, 
$\bpsi$ has a representing measure $\psi$ on $\R^d$. 
In addition, by \eqref{conv-2},
\[\Vert\mu-\nu\Vert_{TV}\,\geq\,\lim_{\ell\to\infty}\rho_{n_\ell}\,=\,\lim_{\ell\to\infty}\phi^{(n_\ell)}(1)+\psi^{(n_\ell)}(1)\,=\,\phi(1)+\psi(1)\,,\]
\[\forall\balpha\in\N^d\,:\quad \mu_{\balpha}-\nu_{\balpha}\,=\,\lim_{\ell\to\infty}\phi^{(n_\ell)}_{\balpha}-\psi^{(n_\ell)}_{\balpha}\,=\,
\phi_{\balpha}-\psi_{\balpha}\,.\]
Hence  $(\phi,\psi)$ is an optimal solution of  \eqref{LP-2} (hence of \eqref{LP} as well), and by Corollary
\ref{coro-TV}, 
$(\phi,\psi)=(\phi^*_+,\phi^*_-)$, the Hahn-Jordan decomposition of $\mu-\nu$. 
Finally, as 
 $(n_\ell)_{\ell\in\N}$  was an arbitrary converging subsequence and the limit is independent of the subsequence,
 the whole sequence converges.
 \end{proof}

\subsection{A dual of \eqref{LP-relax}}
In this section we describe a dual of \eqref{LP-relax} and compare 
this dual to the standard dual formulation 
\begin{equation}
\label{TV-infty}
\Vert\mu-\nu\Vert_{TV}\,=\,\sup_{f\in \mathscr{B}(\R^d)}\,\{\,\int f\,d(\mu-\nu):\: \Vert f\Vert_{\infty}\,\leq\,1\,\}\,,
\end{equation}
of the TV distance. Problem  \eqref{TV-infty} is very difficult to solve because
the space $\mathscr{B}(\R^{d})$ is too large, and also because 
$\mathrm{supp}(\mu)$ and/or $\mathrm{supp}(\nu)$ are allowed to be unbounded. In fact we are not aware  of any
algorithm that can approximate the optimal value of \eqref{TV-infty} as closely as desired, at this level of generality.
On the other hand, with $n\in\N$ fixed, consider the optimization problem

\begin{equation}
\label{LP-relax-dual}
\begin{array}{rl}
\rho^*_n\,:=\displaystyle\sup_{p,\sigma_i,\psi_j} &\{\,\displaystyle\int p\,d(\mu-\nu) -\int \sigma_1\,d\mu-\int\psi_1\,d\nu:\: \\
&1-p\,=\,\sigma_0-\sigma_1\,;\: 1+p\,=\,\psi_0-\psi_1\,;\\
&p\in\R[\x]_{2n}\,;\:\sigma_i\,,\psi_i\in\Sigma[\x]_n\,,\:i=1,2\,\}\,.
\end{array}
\end{equation}
As $\sigma_i,\psi_i$ are all SOS polynomials, the constraints of \eqref{LP-relax-dual} 
\begin{equation}
\label{aux}
p\,\leq\,1+\sigma_1\quad\mbox{and}\quad -p\,\leq\,1+\psi_1\,,\quad  \forall \x\in\R^d\,,\end{equation}
imply 
\begin{equation}
\label{ersatz}
\vert\,p(\x)\,\vert\,\leq\, 1+\max[\sigma_1(\x),\psi_1(\x)]\,,\quad\forall \x\in\R^d\,.\end{equation}
and in \eqref{LP-relax-dual}, $\int \sigma_1\,d\mu+\int\psi_1\;d\nu$ is penalized in the criterion which maximizes $\int pd(\mu-\nu)$. So as the constraint $\Vert p\Vert_\infty<1$
cannot be satisfied by a polynomial $p\in\R[\x]_n$, 
one may see \eqref{aux} as a polynomial relaxation of the 
restrictive constraint  $\Vert f\Vert_\infty\leq 1$, $f\in\mathscr{B}(\R^d)$. However 
\begin{proposition}
\eqref{LP-relax-dual} is a dual of \eqref{LP-relax}, i.e., $\rho_n\geq\rho^*_n$ for every $n$.
\end{proposition}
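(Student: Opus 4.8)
The plan is to read the statement as an instance of weak duality, so I would take an arbitrary primal-feasible point $(\bphi,\bpsi)$ of \eqref{LP-relax} and an arbitrary dual-feasible point $(p,\sigma_0,\sigma_1,\psi_0,\psi_1)$ of \eqref{LP-relax-dual}, and show that the primal cost $\phi(1)+\psi(1)$ dominates the dual cost; since $\rho_n$ is the infimum of the former and $\rho^*_n$ the supremum of the latter, this gives $\rho_n\ge\rho^*_n$. Throughout, $\phi$ and $\psi$ denote the Riesz functionals associated with the pseudo-moment vectors $\bphi$ and $\bpsi$ (so that, e.g., $\psi(\psi_1)$ means the functional $\psi$ evaluated at the SOS polynomial $\psi_1$). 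The first step is to extract the single linear identity coupling the two programs: since $p\in\R[\x]_{2n}$ and the equalities $\phi_{\balpha}-\psi_{\balpha}=\mu_{\balpha}-\nu_{\balpha}$ hold for every $\balpha\in\N^d_{2n}$, contracting them against the coefficient vector of $p$ gives
\[
\phi(p)-\psi(p)\,=\,\int p\,d(\mu-\nu)\,.
\]

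Next I would eliminate $p$ through the two affine constraints of the dual. From $1-p=\sigma_0-\sigma_1$ one has $\phi(p)=\phi(1)-\phi(\sigma_0)+\phi(\sigma_1)$, and from $1+p=\psi_0-\psi_1$ one has $-\psi(p)=\psi(1)-\psi(\psi_0)+\psi(\psi_1)$; adding these and substituting the identity above yields
\[
\phi(1)+\psi(1)\,=\,\int p\,d(\mu-\nu)+\phi(\sigma_0)+\psi(\psi_0)-\phi(\sigma_1)-\psi(\psi_1)\,.
\]
The proof then finishes by reading off the sign of each term from the two different semidefinite constraints. Because $\M_n(\bphi)\succeq0$ and $\sigma_0\in\Sigma[\x]_n$ (so $\deg\sigma_0\le 2n$), writing $\sigma_0=\sum_j q_j^2$ with each $\deg q_j\le n$ gives $\phi(\sigma_0)=\sum_j\langle\mathbf{q}_j,\M_n(\bphi)\mathbf{q}_j\rangle\ge0$, and likewise $\psi(\psi_0)\ge0$, so these two terms may simply be dropped. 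For the remaining terms I would invoke the domination constraints: since $\M_n(\bphi)\preceq\M_n(\bmu)$, the same computation gives $\phi(\sigma_1)\le\sum_j\langle\mathbf{q}_j,\M_n(\bmu)\mathbf{q}_j\rangle=\int\sigma_1\,d\mu$, and symmetrically $\psi(\psi_1)\le\int\psi_1\,d\nu$. Substituting these bounds yields
\[
\phi(1)+\psi(1)\,\ge\,\int p\,d(\mu-\nu)-\int\sigma_1\,d\mu-\int\psi_1\,d\nu\,,
\]
whose right-hand side is exactly the dual objective; taking the infimum over primal-feasible points and the supremum over dual-feasible points then gives $\rho_n\ge\rho^*_n$.

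There is no deep obstacle here, since this is ordinary weak duality; the only genuine point requiring care is the bookkeeping that pairs each constraint with the correct multiplier. The bare positivity $\M_n(\cdot)\succeq0$ is what disposes of the ``slack'' squares $\sigma_0,\psi_0$ in the favorable direction, whereas the domination constraints $\M_n(\bphi)\preceq\M_n(\bmu)$ and $\M_n(\bpsi)\preceq\M_n(\bnu)$ are precisely what convert $\phi(\sigma_1)$ and $\psi(\psi_1)$ into the penalty integrals $\int\sigma_1\,d\mu$ and $\int\psi_1\,d\nu$ of \eqref{LP-relax-dual}. The mechanical fact underlying every inequality is that evaluating a Riesz functional on an SOS polynomial $\sum_j q_j^2$ of degree at most $2n$ equals $\sum_j\langle\mathbf{q}_j,\M_n(\cdot)\mathbf{q}_j\rangle$, so that the matrix orderings transfer directly to scalar inequalities; this in turn hinges on every multiplier lying in $\Sigma[\x]_n$ and on $p\in\R[\x]_{2n}$, so that only entries of $\M_n$ and moments up to degree $2n$ are ever used, matching the truncation level of the relaxation.
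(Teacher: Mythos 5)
Your proof is correct and follows essentially the same route as the paper: contract the truncated moment equalities against $p$, eliminate $p$ via the two affine constraints, discard $\phi(\sigma_0)$ and $\psi(\psi_0)$ using $\M_n(\bphi)\succeq0$, $\M_n(\bpsi)\succeq0$, and bound $\phi(\sigma_1)\le\int\sigma_1\,d\mu$, $\psi(\psi_1)\le\int\psi_1\,d\nu$ via the domination constraints. The only cosmetic difference is that you organize the argument as an exact identity for $\phi(1)+\psi(1)$ followed by sign estimates, whereas the paper writes the same content as a single chain of inequalities starting from the dual objective.
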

\begin{proof}
 Let $(\bphi,\bpsi)$ and $p\in\R[\x]_{2n}$) be arbitrary feasible solutions of \eqref{LP-relax} and \eqref{LP-relax-dual} respectively. As $\int \sigma_1\,d\mu\geq\phi^+(\sigma_1)$
 and $\int \psi_1\,d\nu\geq\phi^-(\psi_1)$,
 \[\int p\,d(\mu-\nu)-\int \sigma_1\,d\mu -\int \psi_1\,d\nu\,\leq\,
 \phi^+(p)-\phi^+(\sigma_1)-\phi^-(p) -\phi^-(\psi_1)\,\]
 \[\leq\phi^+(1-\sigma_0)+\phi^-(1-\psi_0)\,\leq\,\phi^+(1)+\phi^-(1)\,,\]
 where we have used that $\phi^+(\sigma_0)\geq0$ (as 
 $\M_n(\bphi^+)\succeq0$ and $\sigma_0\in\Sigma[\x]_n$).
 This proves weak duality, i.e., $\rho_n\geq\rho^*_n$.
 \end{proof}
 We next prove that strong duality holds, i.e., there is no duality gain between \eqref{LP-relax} and its dual 
\eqref{LP-relax-dual}. 
 Recall that if $(\phi^+,\phi^-)$ is the Hahn-Jordan decomposition of $\mu-\nu$, then $\phi^+\leq\mu$ and $\phi^-\leq\nu$. Therefore
  \begin{equation}
  \label{HJ-decomp}
  \phi^+\,=\,f^+\,d\mu\quad\mbox{and}\quad
  \phi^-\,=\,f^-\,d\nu\,,
  \end{equation}
  for some nonnegative measurable functions $f^+,f^-$ with $f^+\leq 1$, $\mu$-a.e., and $f^-\leq 1$, $\nu$-a.e.
\begin{lemma}
 \label{duality-gap}
Let $(\phi^+,\phi^-)$ be the Hahn-Jordan decomposition of $\mu-\nu$ and suppose that with $f^+,f^-$ as in \eqref{HJ-decomp},
$f^+<1$ (resp. $f^-<1$) on some open set $O_+$ (resp. $O^-$).
Then there is no duality gap between \eqref{LP-relax} and its dual \eqref{LP-relax-dual}, i.e., $\rho_n=\rho^*_n$ for all $n$ and in addition, \eqref{LP-relax-dual} has an optimal solution $(p^*,\sigma_i^*,\psi_i^*)$.
\end{lemma}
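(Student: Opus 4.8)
The plan is to establish strong duality by verifying Slater's condition (strict feasibility) for the primal semidefinite program \eqref{LP-relax}: once this is in place, the general conic duality theorem (see e.g. \cite{anjos-lasserre}) yields simultaneously the absence of a duality gap, $\rho_n=\rho^*_n$, and the attainment of the supremum in the dual \eqref{LP-relax-dual}. Weak duality $\rho_n\geq\rho^*_n$ is already available from the preceding proposition, and the primal value is finite since the feasible set of \eqref{LP-relax} was shown to be compact. Thus the whole argument reduces to producing a feasible pair $(\bphi,\bpsi)$ at which all four semidefinite constraints hold strictly, i.e. $0\prec\M_n(\bphi)\prec\M_n(\mu)$ and $0\prec\M_n(\bpsi)\prec\M_n(\nu)$.

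To build such a point I would perturb the Hahn-Jordan solution. Fix a reference measure $\eta$ on $\R^d$ with all moments finite and $\M_n(\eta)\succ0$ for every $n$ (a standard Gaussian will do), and for $\epsilon>0$ set $\bphi:=\phi^++\epsilon\,\eta$ and $\bpsi:=\phi^-+\epsilon\,\eta$ at the level of degree-$2n$ moments; these are well defined since $\phi^+\leq\mu$, $\phi^-\leq\nu$ have all moments finite. Adding the \emph{same} measure $\eta$ to both sequences preserves the affine constraint, because $\bphi-\bpsi$ then has the same moments as $\phi^+-\phi^-=\mu-\nu$. The two lower bounds are immediate: $\M_n(\bphi)=\M_n(\phi^+)+\epsilon\,\M_n(\eta)\succeq\epsilon\,\M_n(\eta)\succ0$, and likewise for $\bpsi$.

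The two upper bounds are where the hypothesis of the lemma enters, and I expect them to be the main obstacle. Writing $\M_n(\mu)-\M_n(\bphi)=\M_n((1-f^+)\,d\mu)-\epsilon\,\M_n(\eta)$ and $\M_n(\nu)-\M_n(\bpsi)=\M_n((1-f^-)\,d\nu)-\epsilon\,\M_n(\eta)$, it suffices to prove $\M_n((1-f^+)\,d\mu)\succ0$ and $\M_n((1-f^-)\,d\nu)\succ0$; for then, $\M_n(\eta)$ being fixed, a small enough $\epsilon>0$ makes both differences positive definite and $(\bphi,\bpsi)$ is a Slater point. The positive-definiteness would be argued as follows: if $\int q^2\,(1-f^+)\,d\mu=0$ for some $q\in\R[\x]_n$, then $q$ vanishes $\mu$-almost everywhere on $\{f^+<1\}$, which by hypothesis contains the nonempty open set $O_+$; since $\mu$ charges $O_+$ and $q$ is continuous, $q$ must vanish on a set with nonempty interior, hence $q\equiv0$. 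Therefore the kernel of $\M_n((1-f^+)\,d\mu)$ is trivial, giving strict positive definiteness, and symmetrically for $(1-f^-)\,d\nu$ using $O_-$. The one delicate point to handle carefully is the passage from ``$q=0$ $\mu$-a.e. on $O_+$'' to ``$q\equiv0$'', which requires that $\mu$ genuinely charge the open set $O_+$ (and not concentrate on a lower-dimensional subset of it); this is precisely the role played by assuming $f^+<1$ on a true open set, and completing the proof amounts to making this last implication rigorous.
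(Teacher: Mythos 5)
Your overall strategy is the same as the paper's: verify Slater's condition for the primal \eqref{LP-relax} and invoke conic duality to get $\rho_n=\rho^*_n$ together with dual attainment. The difference is in how the strictly feasible point is produced. The paper takes the truncated moment vectors of the Hahn--Jordan pair $(\phi^+,\phi^-)$ themselves and argues both $\M_n(\phi^+)\prec\M_n(\mu)$ \emph{and} $\M_n(\phi^+)\succ0$ ``for the same reasons''; the second claim is actually fragile (e.g.\ if $\mu=\nu$ then $\phi^+=0$ and $\M_n(\phi^+)=0$, yet the lemma's hypothesis holds with $f^+\equiv0$), whereas your perturbation $\bphi=\phi^++\epsilon\,\eta$, $\bpsi=\phi^-+\epsilon\,\eta$ with $\M_n(\eta)\succ0$ makes the lower bounds automatic while preserving the affine constraint, so your route is more robust on that side. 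For the upper bounds you reduce, exactly as the paper does, to $\M_n((1-f^+)d\mu)\succ0$ via the argument that a nonzero polynomial cannot vanish on an open set. Note, however, that both you and the paper share the same residual gap, which you at least flag explicitly: the hypothesis ``$f^+<1$ on an open set $O_+$'' does not by itself guarantee that $\mu$ charges $O_+$ off the zero set of a given $q\in\R[\x]_n$ (take $\mu$ atomic, or supported on an algebraic subset), so the implication from $\int q^2(1-f^+)\,d\mu=0$ to $q\equiv0$ needs an additional assumption such as $O_+\cap\mathrm{supp}(\mu)\neq\emptyset$ with $\mu$ not concentrated on a degree-$n$ polynomial zero set inside $O_+$. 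Modulo that shared caveat, your argument is correct and, on the lower-bound side, an improvement over the paper's.
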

\begin{proof}
 Let $\bphi^+=(\phi^+_\balpha)_{\balpha\in\N^d_{2n}}$ and 
 $\bphi^-=(\phi^-_\balpha)_{\balpha\in\N^d_{2n}}$ be the respective moment vectors of
 $\phi^+$ and $\phi^-$ up to degree $2n$. Then $(\bphi^+,\bphi^-)$ 
 is an obvious feasible solution of \eqref{LP-relax}, and we next prove it is a strictly feasible solution. Then by our assumption  $\M_n(\phi^+)\prec\M_n(\mu)$; indeed otherwise
 suppose that $\mathrm{Ker}(\M_n(\mu)-\M_n(\phi^+))\neq\emptyset$, i.e., there exists $p\in\R[\x]_n$ such that
 \[0\,=\,\int p^2\,d(\mu-\phi^+)\,=\,\int p^2\,(1-f^+)\,d\mu\,,\]
But then one obtains the contradiction
 \[0\,=\,\int p^2\,d(\mu-\phi^+)\,\geq\,\int_{O^+} p^2\,(1-f^+)\,d\mu\,>\,0\,,\]
 as $p\neq0$ cannot vanish on an open set. For the same  reasons, $\M_n(\phi^+)\succ0$,
 and similarly $0\prec \M_n(\phi^-)\prec\M_n(\nu)$. But this strict feasibility of
 $(\bphi^+,\bphi^-)$ in \eqref{LP-relax} implies that Slater's condition holds for \eqref{LP-relax}.
 Hence by a standard results of duality for conic convex programs, 
 $\rho^*_n=\rho_n$ and since $2\geq \rho_n\geq0$,
 \eqref{LP-relax-dual} is solvable, i.e., it has an optimal solution $(p^*,\sigma_i^*,\psi_i^*)$.
 
\end{proof}

\subsection{Computational remarks}
\paragraph{Moment information} 
To implement the semidefinite relaxation
\eqref{LP-relax} with fixed degree $n$, one requires knowledge 
of  the two moment sequences $(\mu_{\balpha})_{\balpha\in\N^d_{2n}}$ and
$(\nu_{\balpha})_{\balpha\in\N^d_{2n}}$, that is, all moments of $\mu$ and $\nu$ up to degree $2n$.
In some cases, all moments of $\mu$ and $\nu$ can be obtained exactly in explicit form. This is the case if $\mu$ and $\nu$ are Gaussian, or a mixture of Gaussians, or  an exponential (or a mixture of exponentials).
On the other hand, if the only information available is some sample
of i.i.d. random vectors $(X_i)_{i\leq N}$ and $(Y_i)_{i\leq N}$
drawn according to $\mu$ and $\nu$ respectively,  
then for any fixed degree $n$, we may invoke the strong law of large numbers, 
and consider the moment matrices 
$\M_n(\mu^N)$ and $\M_n(\nu^N)$ associated with the corresponding empirical measures $\mu^N$ and $\nu^N$.
By continuity of the eigenvalues, 
$\Vert\M_n(\mu)-\M_n(\mu^N)\Vert$ can be made as small a desired provided that $N$ is sufficiently large. Of course
when $n$ increases the sample size $N$ needs to be adjusted
accordingly.

If $\mu$ and $\nu$ are two probability measures, mutually singular,
then $\Vert\mu-\nu\Vert_{TV}$=2. A perfect case to check whether \eqref{LP-relax} is efficient,
is to test \eqref{LP-relax} with the toy univariate example where
$\mu=\delta_0$ and $\nu=\delta_{\varepsilon}$ for small value of $\varepsilon>0$. Indeed, one might expect
that the convergence  $\rho_n\uparrow\Vert\mu-\nu\Vert_{TV}$  as $n$ grows, could depend on
$\varepsilon$ (the smaller $\varepsilon$, the slower the convergence), or suffer from some numerical difficulties
for small $\varepsilon>0$. As seen in Example \ref{ex-0} below, this is not the case.

\subsection{Discrete (univariate) measures}
If the optimal value of \eqref{LP-relax} satisfies $\rho_n=2$ then obviously $\mu$ and $\nu$ are mutually singular.
Indeed since \eqref{LP-relax} has an optimal solution $(\bphi^*,\bpsi^*)$ with 
$\rho_n=\phi^*(1)+\psi^*(1)=2$, and since $\M_n(\bphi^*)\,\preceq\,\M_n(\bmu)$ (resp. $\M_n(\bpsi^*)\preceq\M_n(\bnu)$), then necessarily
$1=\phi^*(1)=\psi^*(1)$. This implies that the vector $\bphi^*$ (resp. $\bpsi^*$) of pseudo-moments 
up to degree $2n$, is identical to $\bmu$ (i.e. that of $\mu$) (resp. $\bnu$, i.e., that of $\nu$). 
However one may ask whether such a situation happens for a finite degree $n$. We show that this is indeed the case for atomic probability measures on the real line with finite supports, in which case $n=\max[m_1,m_2]$ where
$m_1=\#\mathrm{supp}(\mu)$, and $m_2=\#\mathrm{supp}(\nu)$.

\begin{theorem}
\label{thm-singular}
Let $\mu$ and $\nu$ be  two probability measures on the real line,
supported on $X:=(x(i))_{i=1,\ldots,m_1}$
and $Y:=(y(j))_{j=1,\ldots,m_2}$ respectively.
Then with $\rho_n$ as in \eqref{LP-relax}, 
$\rho_n=\Vert\mu-\nu\Vert_{TV}$ for all $n\,\geq\,\max[m_1,m_2]$. In particular
if $X\cap Y=\emptyset$ (i.e., if $\mu$ and $\nu$ are mutually singular) then $\rho_n=2$ for all 
$n\geq \max[m_1,m_2]$.
\end{theorem}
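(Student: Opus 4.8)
The plan is to prove the two inequalities $\rho_n\le\Vert\mu-\nu\Vert_{TV}$ and $\rho_n\ge\Vert\mu-\nu\Vert_{TV}$ for every $n\ge\max[m_1,m_2]$. The first is already available for free: it is the generic relaxation bound $\rho_n\le\rho=\Vert\mu-\nu\Vert_{TV}$ recorded right after \eqref{LP-relax}. All the work lies in the reverse inequality, and my strategy is to show that for $n\ge\max[m_1,m_2]$ \emph{every} feasible pseudo-moment couple $(\bphi,\bpsi)$ of \eqref{LP-relax} is in fact the degree-$2n$ moment couple of a genuine pair of positive atomic measures $(\phi,\psi)$ satisfying $\phi-\psi=\mu-\nu$. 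Such a $(\phi,\psi)$ is feasible for \eqref{LP}, hence its objective is at least $\Vert\mu-\nu\Vert_{TV}$ by the computation in the proof of Proposition \ref{prop-1} ($\phi(1)+\psi(1)\ge\Vert\phi-\psi\Vert_{TV}$). Minimizing then forces $\rho_n\ge\Vert\mu-\nu\Vert_{TV}$.

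First I would exploit the atomic structure of $\mu$. Writing $\mu=\sum_{i=1}^{m_1}w_i\,\delta_{x(i)}$ with $w_i>0$, one has $\M_n(\mu)=\sum_i w_i\,\bv_n(x(i))\,\bv_n(x(i))^T$, so for $n\ge m_1$ the kernel of $\M_n(\mu)$ is exactly the space of polynomials of degree $\le n$ vanishing on $X$, generated by $q_X(x):=\prod_{i=1}^{m_1}(x-x(i))$ and its multiples $x^s q_X$ of degree $\le n$. The semidefinite sandwich $0\preceq\M_n(\bphi)\preceq\M_n(\mu)$ forces $\ker\M_n(\mu)\subseteq\ker\M_n(\bphi)$: if $v^T\M_n(\mu)v=0$, then $v^T\M_n(\bphi)v\ge0$ and $v^T(\M_n(\mu)-\M_n(\bphi))v\ge0$ must both vanish, so $\M_n(\bphi)v=0$. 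In particular $q_X$ and all the $x^s q_X$ of degree $\le n$ lie in $\ker\M_n(\bphi)$.

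Next I would convert these kernel relations into an atomic representation. Read entrywise, the relations $\M_n(\bphi)\,\mathbf{c}=0$ (for $\mathbf{c}$ the coefficient vector of $x^s q_X$, $0\le s\le n-m_1$) amount to $\phi(x^{j}q_X)=0$ for all $j=0,\dots,2n-m_1$, each identity involving only pseudo-moments of degree $\le 2n$. Writing $q_X(x)=x^{m_1}+\sum_{k<m_1}c_k\,x^k$, this is a linear recurrence of order $m_1$ with characteristic polynomial $q_X$ that determines $\phi_{m_1},\dots,\phi_{2n}$ from $\phi_0,\dots,\phi_{m_1-1}$. Since the roots $x(1),\dots,x(m_1)$ are real and distinct, its solution is $\phi_j=\sum_i a_i\,x(i)^j$ for $0\le j\le 2n$; that is, $\bphi$ is the degree-$2n$ moment vector of $\phi:=\sum_i a_i\,\delta_{x(i)}$. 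Testing $0\preceq\M_n(\bphi)\preceq\M_n(\mu)$ against the Lagrange polynomials $\ell_i\in\R[x]_{m_1-1}\subseteq\R[x]_n$ with $\ell_i(x(j))=\delta_{ij}$ then gives $0\le a_i\le w_i$, i.e. $0\le\phi\le\mu$. The identical argument applied to $\bpsi$ produces a positive $\psi=\sum_j b_j\,\delta_{y(j)}$ supported on $Y$ with $0\le\psi\le\nu$.

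Finally I would invoke the moment-matching constraint. Both $\phi-\psi$ and $\mu-\nu$ are signed measures supported on $X\cup Y$, a set of at most $m_1+m_2$ points, and $\phi_\alpha-\psi_\alpha=\mu_\alpha-\nu_\alpha$ holds for all $\alpha\le 2n$. Since $n\ge\max[m_1,m_2]$ gives $2n\ge m_1+m_2\ge\#(X\cup Y)$, the matched moments reach degree $\#(X\cup Y)-1$ and beyond, so a Vandermonde argument forces $\phi-\psi=\mu-\nu$ exactly. Thus $(\phi,\psi)$ is feasible for \eqref{LP}, giving $\rho_n=\phi(1)+\psi(1)\ge\Vert\mu-\nu\Vert_{TV}$ and hence equality with the relaxation bound. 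The singular case $X\cap Y=\emptyset$ follows at once, since then $\mu\perp\nu$ and both are probability measures, so $\Vert\mu-\nu\Vert_{TV}=\mu(1)+\nu(1)=2$. I expect the third paragraph to be the main obstacle: one must check carefully that the kernel relations harvested from a \emph{single} level $n$ of the hierarchy propagate the order-$m_1$ recurrence all the way up to degree $2n$ (so that $\bphi$ is pinned down as an honest atomic moment vector and not merely a pseudo-moment vector), with the degree bookkeeping guaranteeing that no moment beyond $\phi_{2n}$ is ever invoked.
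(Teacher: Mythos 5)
Your proof is correct, and its overall skeleton matches the paper's: both arguments extract from the sandwich $0\preceq\M_n(\bphi)\preceq\M_n(\bmu)$ the fact that the annihilating polynomial $q_X$ of $X$ (and its shifts) lies in $\ker\M_n(\bphi)$, both use the squared Lagrange interpolation polynomials to obtain the domination $0\le a_i\le w_i$ (i.e.\ $\phi\le\mu$, $\psi\le\nu$), and both conclude by exhibiting a feasible pair for the infinite-dimensional LP \eqref{LP}. Where you genuinely diverge is in the two technical pivots. To pass from the kernel relations to an atomic representation, the paper verifies the flatness condition $\mathrm{rank}\,\M_{m_1}(\bphi^*)=\mathrm{rank}\,\M_{m_1-1}(\bphi^*)$ and invokes the Curto--Fialkow flat extension theorem, whereas you read the kernel relations entrywise as the order-$m_1$ linear recurrence $\phi(x^jq_X)=0$ for $0\le j\le 2n-m_1$ and solve it explicitly using the distinct real roots; this is more elementary and self-contained, and your degree bookkeeping (no pseudo-moment beyond degree $2n$ is ever invoked) checks out, since the shifts $x^sq_X$ with $s\le n-m_1$ paired against rows of degree $\le n$ reach exactly $\phi(x^jq_X)=0$ up to $j=2n-m_1$, pinning down $\phi_{m_1},\dots,\phi_{2n}$. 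To establish $\phi-\psi=\mu-\nu$ at the level of measures, the paper propagates the identity $\mu_k-\phi^*_k=\nu_k-\psi^*_k$ to all $k\in\N$ by an induction driven by the same recurrence and then appeals to moment determinacy of compactly supported measures, whereas you observe that a signed measure on the at most $m_1+m_2\le 2n$ points of $X\cup Y$ is already determined by its moments up to degree $\#(X\cup Y)-1$, a finite Vandermonde argument. A further small gain of your route is that it applies to \emph{every} feasible point of \eqref{LP-relax}, not only to an optimal solution, so it bounds the infimum directly without needing attainment (which the paper supplies via Theorem \ref{th-3}(i)). The price is that the recurrence/Vandermonde machinery is intrinsically univariate, while the flat-extension route is the one that would have a chance of generalizing to $\R^d$.
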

For clarity of exposition, the proof is postponed to the appendix.

Notice that in Theorem \ref{thm-singular}, nowhere is needed an 
assumption 
on the ``distance" between points of the respective supports $X$ and $Y$ of the discrete  
measures $\mu$ and $\nu$.
However in practice, the behavior of (numerical) semidefinite software packages needed to
solve \eqref{LP-relax} is sensitive to this parameter for numerical reasons.

\begin{example}
\label{ex-0}
To illustrate Theorem \ref{thm-singular} for two mutually singular measures, consider the toy  example with  $d=1$, $\mu=\delta_0$, $\nu=\delta_{\varepsilon}$, $\varepsilon\neq0$, 
so that $\Vert\mu-\nu\Vert_{TV}=2$
 and $(\phi^*_+,\phi^*_-)=(\mu,\nu)$. The semidefinite relaxation \eqref{LP-relax} with $n=1$ reads:
 \[\begin{array}{rl}
 \rho_1=\displaystyle\min_{\bphi,\bpsi} &\{\,\phi_0+\psi_0: \phi_0=\psi_0\,;\:\phi_1-\psi_1=-\varepsilon\,;\:\phi_2-\psi_2=-\varepsilon^2\\
 &0\,\preceq\,\left [\begin{array}{cc}\phi_0 &\phi_1\\ \phi_1 & \phi_2\end{array}\right]
 \preceq \left [\begin{array}{cc}1 &0\\ 0 & 0\end{array}\right]\,;\:
  
 0\,\preceq\,\left [\begin{array}{cc}\psi_0 &\psi_1\\ \psi_1 & \psi_2\end{array}\right]
 \preceq \left [\begin{array}{cc}1 &\varepsilon\\ \varepsilon & \varepsilon^2\end{array}\right]\,\}\,.
 \end{array}\]
 The constraint $0\preceq\M_1(\bphi)\preceq\M_n(\delta_0)
 $ combined with $(0,1)\in \mathrm{Ker}(\M_1(\mu))$
 implies $(0,1)\in \mathrm{Ker}(\M_1(\bphi))$, which in turn implies $\phi_1=\phi_2=0$. Hence $\psi_1=\varepsilon$ and
 $\psi_2=\varepsilon^2$. But then $\M_1(\bpsi)\succeq0$ implies
 $\varepsilon^2\psi_0\geq\varepsilon^2$, which with $\psi_0\leq1$,  implies $\psi_0=1$ and so $\phi_0=\psi_0=1$, and $\rho_1=2$.
 
 This toy example illustrates that  in principle the first semidefinite relaxation \eqref{LP-relax} provides the optimal solution $(\phi^*_+,\phi^*_-)$, no matter how close $\varepsilon$  is to $0$
 (see Theorem \ref{thm-singular}). 
 One can see here (and also in the proof of Theorem \ref{thm-singular})  how crucial for the relaxations \eqref{LP-relax} are the domination constraints   $\M_n(\bphi)\preceq\M_n(\bmu)$ and $\M_n(\bpsi)\preceq\M_n(\bnu)$, whereas they are not needed in the infinite-dimensional LP \eqref{LP}. 
 \end{example}
Theorem \ref{thm-singular} shows that
(at least in the univariate case) the semidefinite relaxations \eqref{LP-relax} obtain the exact value 
$\Vert\mu-\nu\Vert_{TV}$  as soon as $n\geq\max[m_1,m_2]$, that is, as soon as the 
minimal required moment information is used.
Moreover, nowhere in the proof 
was a condition on some minimum distance 
between  atoms of
$\mu$ and $\nu$. In fact Theorem \ref{thm-singular} and the toy illustrative example of Example 
\ref{ex-0} above, show that the atoms can be as close as desired 
without affecting the result. Of course this assertion  is only theoretical in nature and must be mitigated by the numerical behavior of the semidefinite solver in charge of solving 
the semidefinite program \eqref{LP-relax}. Indeed if some atoms are too close one should reasonably expect to encounter some numerical issues.

\subsection{Numerical examples}

In this section we provide some illustrative examples that give a first  idea of the behavior
of the moment-relaxations \eqref{LP-relax}. 

\noindent
{\bf Discrete measures.}
 To illustrate Theorem \ref{thm-singular}, we first consider the simple case of two discrete measures 
\[\mu=\frac{1}{m_1}\sum_{i=1}^{m_1}\delta_{x(i)}\,,\quad
\nu=\frac{1}{m_2}\sum_{i=1}^{m_2}\delta_{y(i)}\,.\]
In all the examples we have used the GloptiPoly 3 software for polynomial optimization \cite{GloptiPoly}
which in turns used the SeDuMi 1.03 semidefinite solver. \cite{SeDuMi}.

\begin{example}
\label{ex1}
With no point in common, i.e., $X:=\{x(i)\}$, $Y:= \{y(j)\}$ and $X\cap Y=\emptyset$, so that
$\Vert\mu-\nu\Vert_{TV}=2$ as $\mu$ and $\nu$ are mutually singular.  
Let $X=\{-1.0, 0.0, 1.0, 2.0\}$; $Y=\{-0.7, 0.3, 1.3, 2.3\}$.
Then in solving \eqref{LP-relax} with $n=4$ (i.e. with $8$ moments of $\mu$ and $\nu$), we obtain $\rho_4=1.9999$ which up to 
machine precision is considered to be $2$, as predicted by Theorem \ref{thm-singular}.
\end{example}
\begin{example}
\label{ex2}
With one point in common. If we now consider $X=\{-1.0, 0.0, 1.0, 2.0\}$ and
$Y=\{-2.0, -1.0, 0.1, 1.5\}$ so that $X\cap Y=\{-1.0\}$ and as the weights are all equal,
one obtains $\Vert\mu-\nu\Vert_{TV}=1.5$. Then with $n=4$  we obtain
$\rho_4=1.499$, which again up to machine precision can be considered as $1.5$. 
\end{example}
\begin{example}
\label{ex3}
In this example, $X=\{-1.0, 0.0, 1.0, 2.0\}$ and $Y=\{-0.7, 0.3, 1.3, 2.3\}$
(so that the points of $Y$ are ``closer" to those of $X$. 
\begin{table}[ht]
\caption{$\Vert \mu-\nu\Vert_{TV}$ for two discrete measures;
$X\cap Y=\emptyset$;
$X=\{-1.0, 0.0, 1.0, 2.0\}$; $Y=\{-0.7, 0.3, 1.3, 2.3\}$
\label{Table-1}}
\begin{tabular}{|c|c|c|}
\hline
\hline
$n$ &  4 & 5 \\
\hline
$\rho_n$ & 1.9999 & 1.9999\\
\hline
\end{tabular}
\end{table}
From results displayed in Table \ref{Table-1},
 one can see that even if some points are relatively close to each other,
the semidefinite relaxation \eqref{LP-relax} still provide a  value $\rho_n$ very close to $2$, as soon as $n\geq 4$ (i.e., with $2n=8$ moments), as predicted by Theorem \ref{thm-singular}. But now due to numerical inaccuracies
of the semidefinite solver, the resulting value is  less precise (but one can still extract
a solution $(\phi^+,\phi^-)$ very close to $(\mu,\nu)$.
\end{example}
\begin{example}
\label{ex-2}
With one point in common, i.e., $\#(X\cap Y)=1$. Let
$X=\{0.0, 0.3, 0.4, 0.9\}$ and $Y=\{0.3, 0.6, 0.7, 1.2\}$
and let the weights be  equal so that one must find 
$\Vert\mu-\nu\Vert_{TV}=1.5$.

By solving \eqref{LP-relax} with $n=4$ (i.e. with moments up to degree $8$),
one obtains $\rho_4=1.4998$ and one may extract the atoms of $\phi^*$ 
and $\psi^*$ via a subroutine of GloptiPoly.
So again, even if some points are relatively close to each other (and with $1$ point in common), the semidefinite relaxation \eqref{LP-relax} still provide a value $\rho_n=1.4998\approx 1.5$
(some unavoidable numerical inaccuracy is proper to the semidefinite solver), as soon as $n\geq 4$ (i.e., with $2n=8$ moments), as predicted by
Theorem \ref{thm-singular}.
\end{example}
\vspace{.2cm}

\noindent
{\bf Two Gaussian measures.}
We next consider the case where $\mu=\mathcal{N}(m_1,\sigma_1)$ and 
$\nu=\mathcal{N}(m_2,\sigma_2)$, and we fix
the number of moments that we consider to be $2n=4,6,8$.

\begin{table}[ht]
\caption{$\Vert \mu-\nu\Vert_{TV}$ for Gaussian measures
$\mathcal{N}(m_1,\sigma_1)$ and $\mathcal{N}(m_2,\sigma_2)$
\label{Table-3}}
\begin{tabular}{|c|c|c|c|c|c|}
\hline
\hline
$(m_1\,,\,s_1)$& $(m_2\,,\,s_2)$& $\rho_1$& $\rho_2$& $\rho_3$& $\rho_4$\\
\hline
\hline
(0\,,\,0.1)  & (1\,,\,0.1)& 1.9231& 1.9936& 1.9991& 1.9997\\
\hline
(0\,,\,0.2) & (1\,,\,0.2) & 1.7241& 1.9049& 1.9376& 1.939\\
\hline
(0\,,\,0.1) & (1\,,\, 0.5) & 1.4706& 1.6267& 1.6283& 1.7032\\
\hline
(0\,,\,0.5) & (1\,,\,0.5)& 1.0000& 1.0000& 1.1653& 1.1897\\
\hline
(0.5\,,\,0.1) & (1\,,\,0.1)& 1.7241&1.9049& 1.9375& 1.9378\\
\hline
(0.5\,,\,0.1) & (1\,,\,0.5)& 0.8197& 0.8497& 1.1249& 1.1294\\
\hline
(0.8\,,\,0.1) & (1\,,\,0.1)& 1.000& 1.0000& 1.1645& 1.1709\\
\hline
(0.8\,,\,0.05) & (1\,,\,0.1)& 1.2800&1.3507& 1.4123 &1.4290\\
\hline
(0.8\,,\,0.05) & (1\,,\,0.01)& 1.8349& 1.9616 & 1.9785& 1.9852\\
\hline
\end{tabular}
\end{table}
From results in Table \ref{Table-3} we can see the influence of a small
variance, which tends to provide $\rho_4$ with a value close to $2$, as expected
since $\mu$ and $\nu$ behave almost like the two Dirac 
measures $\delta_{m_1}$ and $\delta_{m_2}$, which are
mutually singular whenever $m_1\neq m_2$. It also turns out that 
$\rho_1$ coincides with the analytical lower bound provided in \cite{Nishiyama} on two arbitrary measures 
with given means and variances $(m_1,\sigma_1)$ and $(m_2,\sigma_2)$, namely
\[\Vert \mu-\nu\Vert_{TV}\,\geq\,2\,\frac{(m_1-m_2)^2}{(\sigma_1+\sigma_2)^2+(m_1-m_2)^2}\,.\]
(See \cite{Nishiyama}.) Notice that already with $n=2$, i.e., with moments up to degree $4$, $\rho_n$ provides a significant improvement in all cases.

For all the results, the largest size of moments matrices was $7\times 7$ and 
all the results were  obtained in less than 0.35s on a Lap-top {\tt HP Elitebook Ubuntu 24}.

\section{Conclusion} We have provided a numerical scheme to 
approximate as closely as desired the total variation distance between two measures $\mu$ and $\nu$ on $\R^d$. We have addressed this problem
under fairly general assumptions on $\mu$ and $\nu$
(Carleman's condition or the easier to check sufficient condition \eqref{suff-cond}). In particular the supports of $\mu$ and $\nu$ are \emph{not} required to be compact.
Moreover, in case where 
$\mu$ and $\nu$ are only accessible via i.i.d. samples, and for a fixed value of the degree $n$,  
the SLLN ensures that empirical moments obtained from a sufficiently large sample,
are enough for the step-$n$ semidefinite relaxation
to provide an accurate lower bound for the TV distance. Finally, even before convergence takes place,
the optimal value of each semidefinite relaxation provides a useful guaranteed lower bound
on the TV-distance, the larger $n$, the better. Of course this numerical scheme is sensitive to the dimension and so far is restricted to small dimension problems if good quality lower bounds are 
expected. (On the other hand, even crude lower bounds might be interesting in higher dimensional problems.) Therefore a topic of further investigation is to provide alternative and computationally cheaper lower bounds,
possibly at the price of loosing convergence. 


\section*{Acknowledgement}
The author is supported by the Artificial and Natural Intelligence Toulouse Institute,
ANITI IA Cluster - ANR-23-IACL-0002-IACL - 2023.
This research is also part of the programme DesCartes and is supported by the National Research Foundation, Prime Minister's Office, Singapore under its Campus for Research Excellence and Technological Enterprise (CREATE) programme.

\section{Appendix}
\subsection*{Proof of Theorem \ref{thm-singular}}

 Define the (monic) polynomials
 \begin{equation}
 \label{def-p-q}
  x\mapsto p(x)\,:=\,\prod_{i=1}^{m_1}(x-x(i))\,;\quad 
 x\mapsto q(x)\,:=\,\prod_{j=1}^{m_2}(x-y(j))\,,\end{equation}
  with respective vector of coefficients $\bp\in\R^{m_1+1}$
 and $\q\in\R^{m_2+1}$ in the usual monomial basis. 
 
 Let $(\bphi^*,\bpsi^*)$ be an optimal solution of \eqref{LP-relax} with $n=n_0:=\max[m_1,m_2]$, and w.l.o.g. 
 suppose that $n_0=m_1$. 
 Then from $\int p^2\,d\mu=0$ one deduces that $\M_{m_1}(\bmu)\bp=0$ and combining with 
 $0\preceq\M_{m_1}(\bphi^*)\preceq\M_{m_1}(\bmu)$, one also obtains 
 $\M_{m_1}(\bphi^*)\bp=0$.  Hence
 $\mathrm{rank}(\M_{m_1}(\bphi^*))=\mathrm{rank}(\M_{m_1-1}(\bphi^*))$ because 
 
 -- to every zero-eigenvector $\mathbf{h}\in\R^{m_1}$ of $\M_{m_1-1}(\bphi^*)$ (if any exists) corresponds
 a zero-eigenvector $(\mathbf{h},0)\in\R^{m_1+1}$ of $\M_{m_1}(\bphi^*)$. 
 Indeed 
 \[0\,=\,\mathbf{h}^T\M_{m_1-1}(\bphi^*)\mathbf{h}\,=\,\left(\begin{array}{c}\mathbf{h} \\ 0\end{array}\right)^T\,
 \M_{m_1}(\bphi^*)\left(\begin{array}{c}\mathbf{h} \\ 0\end{array}\right)\, \Rightarrow \M_{m_1}(\bphi^*)
 \left(\begin{array}{c}\mathbf{h} \\ 0\end{array}\right)\,=\,0\,,\]
 
 -- the vector $\bp\in\R^{m_1+1}$ of the polynomial $p\in\R[x]_{m_1}$ (and $p\not\in\R[x]_{m_1-1}$)
 is in the kernel of $\M_{m_1}(\bphi^*)$ and not in the kernel of $\M_{m_1-1}(\bphi^*)$.
 
Then by Curto and Fialkow's flat extension theorem \cite[Theorem 3.7, p. 62]{CUP}, $\bphi^*$ has a 
 an atomic representing measure $\phi^*$ supported on at most $\mathrm{rank}(\M_{m_1}(\bphi^*))$ points.
 In addition $\mathrm{supp}(\phi^*)\subset X$ as $\int p^2d\phi^*=0$.
  
 Next, with $m_2\leq n=m_1$, and considering the sub-matrices $\M_{m_2-1}(\bpsi^*)$
 and $\M_{m_2}(\bpsi^*)$ as principal submatrices of $\M_n(\bpsi^*)$, a similar argument
 as above (but with $q$ instead of $p$) yields $\mathrm{rank}(\M_{m_2}(\bpsi^*))=\mathrm{rank}(\M_{m_2-1}(\bpsi^*))$.
   In addition,  if $m_2<n$ then consider the polynomials $x^kq\in\R[x]_{m_2+k}$,  with respective vectors
 $\q_k\in\R^{m_2+k+1}$, $1\leq k\leq n-m_2$.  Observe that for every
 $k$, $\M_{m_2+k}(\bpsi^*)\q_k =0$ because 
 $\M_{m_2+k}(\bpsi^*)\preceq\M_{m_2+k}(\bnu)$, and $\int q_k^2d\nu=0$).
 
 Hence $\q_k\in\mathrm{Ker}(\M_{m_2+k}(\bpsi^*))$, for every $1\leq k\leq n-m_2$,
and repeating the arguments that we have used for $\phi^*$ and $\mu$, one obtains
  $\mathrm{rank}(\M_{m_2+k}(\bpsi^*))=\mathrm{rank}(\M_{m_2}(\bpsi^*))$ for every 
 $k\leq n-m_2$.  Therefore invoking again Curto and Fialkow's flat extension theorem,
 $\bpsi^*$ has an atomic representing measure $\psi^*$ supported on at most $\mathrm{rank}(\M_{m_2}(\bpsi^*))$ points
 with $\mathrm{supp}(\psi^*)\subset Y$. Next, write
 \[\mu=\sum_{i=1}^{m_1}\alpha_i\,\delta_{x(i)}\,;\quad
 \nu=\sum_{j=1}^{m_2}\beta_j\,\delta_{y(j)}\,,\quad
 \mbox{with $\alpha_i,\beta_j>0,\:\forall i,j$}\,;\:\sum_i\alpha_i=\sum_j\beta_j=1\,,\]
 and from $\mathrm{supp}(\phi^*)\subset X$ 
 and $\mathrm{supp}(\psi^*)\subset Y$,
  we can also write
 \[\phi^*\,=\sum_{i=1}^{m_1}\alpha'_i\,\delta_{x(i)}\,;\quad
 \psi^*\,=\sum_{j=1}^{m_2}\beta'_j\,\delta_{y(j)}\,,\quad
 \mbox{with $\alpha'_i,\beta'_j\geq0,\:\forall i,j$,}\]
 and $\sum_i\alpha'_i\leq1$, $\sum_j\beta'_j\leq1$.
  Next, consider the interpolation polynomials
 \[p_i(x)\,:=\,\frac{\prod_{\ell\neq i}(x-x(\ell))}{\prod_{\ell\neq i}(x(i)-x(\ell))}\,, \quad 
  q_j(x)\,:=\,\frac{\prod_{\ell\neq j}(x-y(\ell))}
  {\prod_{\ell\neq j}(y(j)-y(\ell))}\,,\]
so that $p_i\in\R[x]_{m_1-1}$ and $q_j\in\R[x]_{m_2-1}$ 
for all $i=1,\ldots,m_1,\,j=1,\ldots,m_2$.
With $n\geq\max[m_1,m_2]$, and using $0\preceq\M_n(\bphi^*)\preceq\M_n(\bmu)$, observe that
  \[\alpha_i\,=\,\int p_i^2\,d\mu\;\geq\,\int p_i^2\,d\phi^*\quad(=\langle\bp_i,\M_n(\bphi^*)\bp_i\rangle)\,=\,\alpha'_i\,
  ,\quad \forall i=1,\ldots,m_1\,.\]
 Similarly, using $\M_n(\bpsi^*)\preceq\M_n(\bnu)$, 
  \[\beta_j\,=\,\int q_j^2\,d\nu\;\geq\,\int q_j^2\,d\psi^*\quad(=\langle\q_j,\M_n(\bpsi^*)\q_j\rangle)\,=\,\beta'_j\,
  ,\quad \forall j=1,\ldots,m_2\,.\]
  Hence we may deduce that $\phi^*\leq\mu$ and $\psi^*\leq\nu$.
  In addition, since $2m_1\leq 2n$, and as $\phi^*_j-\psi^*_j=\mu_j-\nu_j$ for all
 $j\leq 2n$,
 \[0\,=\,\int p^2\,d(\mu-\phi^*)\,=\,\int p^2\,d(\nu-\psi^*)\quad\Rightarrow\mathrm{supp}(\nu-\psi^*)\subset X\,.\]
 In particular this implies $\mathrm{supp}(\nu-\psi^*)\subset X\cap Y$
 (because $\mathrm{supp}(\nu),\mathrm{supp}(\psi^*)\subset Y$) and 
 \begin{equation}
 \label{aux-33}
 \int x^k \,p\,d(\nu-\psi^*)\,=\,0\,,\quad\forall k\in \N\,.\end{equation}
 So if $X\cap Y=\emptyset$ then necessarily $\nu-\psi^*=0$, i.e. $\psi^*=\nu$ 
 and $\psi^*(1)=1$. With similar arguments, as $2m_2\leq n$,
  \[0\,=\,\int q^2\,d(\nu-\psi^*)\,=\,\int q^2\,d(\mu-\phi^*)\quad\Rightarrow\mathrm{supp}(\mu-\phi^*)\subset Y\,,\]
 and in particular, $\mathrm{supp}(\mu-\phi^*)\subset X\cap Y$
 (because $\mathrm{supp}(\mu),\mathrm{supp}(\phi^*)\subset X$). Hence if $X\cap Y=\emptyset$,
 then $\phi^*=\mu$ and $\psi^*=\nu$, which yields $\phi^*(1)+\psi^*(1)=2=\Vert\mu-\nu\Vert_{TV}$.

  We want to prove that $\mu-\phi^*=\nu-\psi^*$. Indeed if true then $(\phi^*,\psi^*)$ is a feasible solution
 of \eqref{LP-new} with value $\rho_n\leq\Vert\mu-\nu\Vert_{TV}$, 
 which implies that $(\phi^*,\psi^*)$ is an optimal solution of
 \eqref{LP-new}, hence with $\rho_n=\Vert\mu-\nu\Vert_{TV}$, the desired result.

To prove that $\mu-\phi^*=\nu-\psi^*$ it is enough to prove that
\begin{equation}
\label{aux-xx}
\mu_{k}-\phi^*_{k}\,=\,\nu_{k}-\psi^*_{k}\,,\quad\forall k\in\N\,.\end{equation}
Indeed both $\mu-\phi^*$ and  $\nu-\psi^*$ are positive 
measures supported on $X$ and $Y$ respectively, hence with compact support.
Therefore if \eqref{aux-xx} holds then $\mu-\phi^*=\nu-\psi^*$
as measures on compact sets are moment determinate. 

We prove \eqref{aux-xx} by induction.
Let $j\in\N$ be fixed  and assume that $\mu_k-\phi^*_k=\nu_k-\psi^*_k$  for all $k\leq 2m_1+j\,(=2n+j)$;
by construction the statement is true for $j=0$. We next prove that
\begin{equation}
\label{aux-xxx}
\mu_{k}-\phi^*_{k}\,=\,\nu_{k}-\psi^*_{k}\,,\quad \forall  k\leq 2m_1+j+1\,.
\end{equation}
With $p$ as in \eqref{def-p-q}, write
 $p(x)=x^{m_1}-\sum_{k=0}^{m_1-1}p_k\,x^k$, so that
 \begin{equation}
 \label{aux-44}
 x^{2m_1+j+1}\,=\,x^{m_1+j+1}\,p(x)+\sum_{k=0}^{m_1-1}p_k\,x^{k+m_1+j+1}\,,\end{equation}
 and therefore, integrating with respect to the measure $\mu-\phi^*$, yields
 \begin{eqnarray*}
 \mu_{2m_1+j+1}-\phi^*_{2m_1+j+1}
 &=&\underbrace{\int x^{m_1+j+1}\,p(x)\,d(\mu-\phi^*)}_{\mbox{[$=0$ as $\mathrm{supp}(\mu),\mathrm{supp}(\phi^*)\subset X$]}}\\
&& +\sum_{k=0}^{m_1-1}p_k\,(\mu_{k+m_1+j+1}-\phi^*_{k+m_1+j+1})\\
 &=& \sum_{k=0}^{m_1-1}p_k\,(\mu_{k+m_1+j+1}-\phi^*_{k+m_1+j+1})\\
 &=& \sum_{k=0}^{m_1-1}p_k\,(\nu_{k+m_1+j+1}-\psi^*_{k+m_1+j+1})\,\:\mbox{[by induction hypothesis]}\\
 &=&  \int x^{2m_1+j+1}\,d(\nu-\psi^*)- \underbrace{\int x^{m_1+j+1} p(x)\,d(\nu-\psi^*)}_{=0\mbox{ by \eqref{aux-33}}}\quad\mbox{[ using \eqref{aux-44} ]}\\
 &=&  \int x^{2m_1+j+1}\,d(\nu-\psi^*)\,=\,\nu_{2m_1+j+1}-\psi^*_{2m_1+j+1}\,,
 \end{eqnarray*}
 which proves \eqref{aux-xxx}.  As $j\in\N$ was arbitrary, it implies \eqref{aux-xx}. $\Box$


\begin{thebibliography}{lass}
\bibitem{Nishiyama}
T. Nishiyama. Lower bounds for the total variation distance given means and variances
of distributions, {\tt arXiv:2212.05820}, 2022.

\bibitem{SeDuMi}
J.F. Sturm. Using SeDuMi 1.02, a MATLAB toolbox for optimization over symmetric cones,
Optim. Methods and Software 11-12, pp. 625-653, 1999.


\bibitem{csiszar}
I. Csisz\'{a}r and J. K\"{o}rner. \emph{Information Theory: Coding Theorems for Discrete Memoryless Systems},
2nd edition, Cambridge University Press, Cambridge, UK, 2011.


\bibitem{Tsybakov}
A.B. Tsybakov. \emph{Introduction to Nonparametric Estimation},
Springer-Verlag, New York, 2009.

\bibitem{dist-robust1}
I. Goodfellow, J. Pouget-Abadie, M. Mirza, Bing Xu, D. Warde-Farley, S. Ozair, A. Courville, Y. Bengio.
Generative adversarial nets, In  \emph{Advances in Neural Information Processing Systems},
vol 27, 2014.

\bibitem{dist-robust2}
F. Lin, X.  Fang, and Z. Gao. Distributionally robust optimization: A review on theory and applications,
Num. Alg. Control Optim. 12(1), pp. 159--212, 2022.


\bibitem{dist-robust3}
D. Kuhn, S. Shafiee, and W. Wiesemann. Distributionally robust optimization,
Acta Numerica 34, pp. 579--804, 2025.

\bibitem{entropy}
M. Markatou and Yang Chen. Non-quadratic distances in model assessment,
Entropy 20(6), 464, 2018.

\bibitem{dist-robust4}
H. Rahimian,  G. Bayraksan, and T. Homem-de Mello. Identifying effective scenarios in distributionally  robust stochastic programs with total variation, Math. Program. 173, pp. 393--430, 2019.

\bibitem{lasserre-acta}
J.~B. Lasserre. The {M}oment {S}o{S} {H}ierarchy: {A}pplications and {R}elated {T}opics, 
Acta Numerica 33, pp. 841--908, 2024.

\bibitem{devroye-1}
L. Devroye and L. Gy\"ordi. No empirical measure can converge in total variation sense for all distributions,
Ann. Stat. 18, pp. 1496--1499, 1990.


\bibitem{devroye-2}
L. Devroye, A. Mehrabian, and T. Redid.
The total variation distance between high-dimensional gaussians with the same mean,
{\tt arXiv:1810.08693}, 2018.


\bibitem{gaussian-high}
J. Arbas,  H. Ashtiani, and C. Liaw.
Polynomial time and private learning of unbounded Gaussian mixture models,
{\tt  arXiv.2303.04288}, 2023.


\bibitem{barron}
A. Barron, L. Gy\"orfi, and E.C. van der Meulen.
Distribution estimation consistent in total variation and in two types of information divergence,
IEEE Trans. Info. Theory 38(5), pp. 1437--1454, 1992.

\bibitem{PMLR22}
S. Davies, A. Mazumdar, S. Pal, and C. Raschtchian.
Lower bounds on the total variation distance between mixtures of two Gaussians,
Proc. Machine Learning Research 132, pp.1--23, 2022.

\bibitem{ers}
B.~K. Sriperumbudur, K. Fukumizu, B. Sch\"olkopf, and G.R.G. Landkriet.
On the empirical estimation of integral probability metrics,
Electr. J. Stat. 6, pp. 1550--1599, 2012.

\bibitem{cuturi}
G. Peyr\'e and M. Cuturi. Computational {O}ptimal {T}ransport: {W}ith applications to {D}ata {S}science,
Found. Trends in Machine Learning 11(5-6), pp. 355--607, 2019.

\bibitem{GloptiPoly}
D. Henrion, J.~B. Lasserre, and J. Lofberg.
Gloptipoly 3: {M}oments, optimization and semidefinite programming;
Optim. Methods and Softwares 24, pp. 761--779, 2009.


\bibitem{julia}
T. Weisser, B. Legat, C. Coey, L. Kapelevich, and J.P. Vielma.
Polynomial and moment optimization in {J}ulia and {J}ump,
In \emph{Juliacon}, 2019.



\bibitem{HL-constructive}
D. Henrion and J.~B. Lasserre.
Graph recovery from incomplete moment information,
Constr. Approx. 56, pp. 165--187, 2022.

\bibitem{ICM-2018}
J.~B. Lasserre.
The {M}oment-{S}{O}{S} Hierarchy,
Proc. Int.  Cong. Math. (ICM 2018), vol 4,
B. Sirakov, P. Ney de Souza and M. Viana Eds., World Scientific, pp. 3773--3794,
2019.

\bibitem{CUP2022}
J.~B. Lasserre, E. Pauwels,  and M. Putinar.
\emph{The {C}hristoffel-{D}arboux {K}ernel for {D}ata {A}analysis},
Cambridge University Press, Cambridge, UK, 2022.

\bibitem{anjos-lasserre}
M. Anjos and J.~B. Lasserre (Eds.).
\emph{Handbook on {S}emidefinite, {C}onic and {P}olynomial {O}ptimization},
series \emph{Internat. Ser. Oper. Res. Management Sci.}
vol 166, Springer, New York, 2012.

\bibitem{HKL}
D. Henrion, M. Korda, and J.B. Lasserre.
\emph{The {M}oment-{SOS} {H}ierarchy: {L}ectures in {}Probability, {S}tatistics, {C}omputational {G}eometry, {C}ontrol and {N}onlinear {PDE}s}, World Scientific, Singapore, 2020.

\bibitem{nouy-mula}
O. Mula and A. Nouy.
Moment-{SOS} methods for optimal transport problems,
Num. Math. 156, pp. 1541--1578, 2024.
\bibitem{CUP}
J.~B. Lasserre. \emph{Moments, Positive Polynomials and Their Applications},
Imperial College Press, London, UK, 2009.
 
 \end{thebibliography}
\end{document}